\theoremstyle{plain}
\newtheorem{theorem}{Theorem}[section]
\newtheorem{lemma}[theorem]{Lemma}
\newtheorem{prop}[theorem]{Proposition}
\newtheorem{question}{Question}
\theoremstyle{definition}
\newtheorem{remark}[theorem]{Remark}
\newtheorem*{acknowledgement}{Acknowledgements}
\newtheorem*{concludingremark}{Concluding remark}
\DeclareMathOperator{\spec}{\ensuremath{Spec}}
\DeclareMathOperator{\gr}{\ensuremath{gr}}
\DeclareMathOperator{\ad}{\ensuremath{ad}}
\DeclareMathOperator{\ch}{\ensuremath{char}}
\DeclareMathOperator{\Mod}{\ensuremath{-Mod}}
\DeclareMathOperator{\Hom}{\ensuremath{Hom}}
\DeclareMathOperator{\End}{\ensuremath{End}}
\DeclareMathOperator{\Pic}{\ensuremath{Pic}}
\DeclareMathOperator{\Aut}{\ensuremath{Aut}}
\begin{document}
\title[]{Morita equivalence of deformations of Kleinian singularities}
\author{Akaki Tikaradze}
\email{Akaki.Tikaradze@utoledo.edu}
\address{University of Toledo, Department of Mathematics \& Statistics, 
Toledo, OH 43606, USA}

\begin{abstract}
In this note we classify all Morita equivalent pairs of (classical)
generalized Weyl algebras for generic values of the parameters, thus
positively settling a 30 year old question posed by T.~Hodges. We also
prove a similar result for noncommutative deformations of arbitrary
Kleinian singularities provided the corresponding parameters are very
generic.
\end{abstract}

\maketitle

\section{Introduction}

Recall that classical generalized Weyl algebras introduced by V.~Bavula,
also considered by T.~Hodges under the name of noncommutative
deformations of type $A$ singularities \cite{H2}, are defined as follows.
Let $\bold{k}$ be a field and let $v\in \bold{k}[h]$.
Then the corresponding generalized Weyl algebra $A(v)$ is defined as a
quotient of $\bold{k}\langle x, y, h\rangle$
by the following relations
\[
[h, x]=x, \quad [h, y]=-y, \quad xy=v(h), \quad yx=v(h-1).
\]

Generalized Weyl algebras have been extensively studied in the
literature. The isomorphism problem for generalized Weyl algebras (over
$\mathbb{C}$) was solved by Bavula and Jordan \cite{BJ}. Namely, they
proved that $A(v)\cong A(v')$ if and only if $v'(h)=a v(bh+c)$ with
$a,b\in\mathbb{C}^*, c\in\mathbb{C}.$ It is not surprising that the
Morita equivalence problem turned out to be more difficult.

From now on, we say that $v\in\mathbb{C}[h]$ is generic if $v$ has no
multiple roots and the differences between its distinct roots are not
integers. It is well-known that the global dimension of $A(v)$ equals
1 precisely for generic values of $v.$

It was observed by Hodges \cite{H2} that (for a generic $v$) $A(v)$ is
Morita equivalent to $A(v')$ if the roots of $v$ and $v'$ differ by
integers. Earlier, it was also proved by Hodges \cite{H1} that
$U(\mathfrak{sl}_2)/(\Delta-\lambda),
U(\mathfrak{sl}_2)/(\Delta-\lambda')$ are Morita equivalent if and only
if $\lambda'\in \pm\lambda+\mathbb{Z}$ (here $\lambda\notin\mathbb{Z}$).
Recall that the algebras $U(\mathfrak{sl}_2)/(\Delta-\lambda)$ (maximal
primitive quotients of $U(\mathfrak{sl}_2)$) correspond to $A(v)$ with
quadratic $v.$
In other words, if $\deg(v)=2$, then $A(v), A(v')$ are Morita equivalent
only when they obviously are. This led Hodges \cite{H2} to pose the
following question.

\begin{question}[Hodges]
Let $A(v)$ and $A(v')$ be Morita equivalent, with $v$ generic. Then are
the algebras $A(v), A(v')$ obviously Morita equivalent?
\end{question}

The main goal of this paper is to give an affirmative answer to the above
question. Before stating it, recall that given an associative ring $R$,
its Picard group $\Pic(R)$ is defined as the group of isomorphism classes
of invertible $R$-bimodules. Given an automorphism $f\in \Aut(R)$ denote
by $R_{f}$ the $R$-bimodule which is $R$ as a left module, and on which
the right action is given by $f.$ Then $R_{f}\in\Pic(R)$ and this way we
may view $\Aut(R)$ as a subgroup of $\Pic(R).$ Our main result is as
follows.


\begin{theorem}\label{main}

Let $v=\prod_{i=1}^n(h-t_i)\in \mathbb{C}[h]$ be generic. 
Then $\Pic(A(v))=\Aut(A(v)).$ Let $B$ be a domain Morita equivalent to
$A(v).$ Then either\\ $\Pic(B)/\Aut(B)$ is uncountable, so in particular
$B$ is not isomorphic to any $A(v'),$ or
\[
B\cong A(v'), \qquad v'(h) = \prod_{i=1}^n(h-t_i+d_i)
\]
for some $d_i\in \mathbb{Z}.$

\end{theorem}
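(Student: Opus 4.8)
The plan is to deduce the whole statement from two ingredients: a rigidity computation for $\Pic(A(v))$, and a structural analysis of the rank-one projective modules over $A(v)$ together with the action of $\Pic(A(v))$ on them. First I would record the reduction to endomorphism rings of ideals. Since $v$ is generic, $A:=A(v)$ is a simple Noetherian domain of global dimension $1$, hence hereditary, so every finitely generated projective right module is isomorphic to $A^{r-1}\oplus I$ for a right ideal $I$. If $B$ is Morita equivalent to $A$ then $B\cong\End_A(P)$ for a progenerator $P$, and writing $P=A^{r-1}\oplus I$ the idempotents projecting onto the free summands force $r=1$ as soon as $B$ is a domain. Thus every domain $B$ in the Morita class has the form $B\cong\End_A(I)$, sitting inside the division ring of fractions of $A$. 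Because the Picard group is a Morita invariant, there is a canonical isomorphism $\Pic(B)\cong\Pic(A)$, so the first assertion $\Pic(A)=\Aut(A)$ determines $\Pic(B)$ for every $B$ in the class, and the whole theorem becomes a question about the $\Pic(A)$-action on isomorphism classes of rank-one projectives.

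To prove $\Pic(A)=\Aut(A)$, I would use that $A$ is simple with $Z(A)=\mathbb{C}$, so the natural map $\Pic(A)\to\Aut_{\mathbb{C}}(Z(A))$ is trivial and every invertible bimodule $M$ is central; the task is to show $M\cong A_f$ for some $f\in\Aut(A)$. Here I would exploit the $\mathbb{Z}$-grading $A=\bigoplus_n A_n$ with $A_0=\mathbb{C}[h]$, $A_n=\mathbb{C}[h]x^n$, $A_{-n}=\mathbb{C}[h]y^n$, and the Ore localization $R=A[x^{-1}]\cong\mathbb{C}[h][x^{\pm1};\sigma]$ for $\sigma$ a unit shift of $\mathbb{C}[h]$, which is a simple skew Laurent ring whose invertible bimodules are readily seen to be of the form $R_g$. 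One then localizes $M$ to an invertible $R$-bimodule, identifies it with some $R_g$, and descends $g$ to $A$: the left and right actions of $h$ on $M$ must be simultaneously diagonalizable with the integer spacing imposed by the grading, which equips $M$ with a compatible grading and forces $M\cong A_f$.

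With $\Pic(A)=\Aut(A)$ in hand, I would analyze the dichotomy through the grading torus $T=\mathbb{C}^{*}\subseteq\Aut(A)$, acting by $a_n\mapsto\lambda^n a_n$ and hence on rank-one classes by $[I]\mapsto[I\otimes_A A_{\sigma_\lambda}]$. By the orbit–stabilizer mechanism, $\Aut(B)$ corresponds under $\Pic(B)\cong\Pic(A)$ to the stabilizer of $[I]$, so that $\Pic(B)/\Aut(B)$ is in bijection with the $\Pic(A)$-orbit of $[I]$, which in particular contains the image of the algebraic family $\lambda\mapsto[I\otimes_A A_{\sigma_\lambda}]$. If $[I]$ is fixed by $T$, then $I$ is isomorphic to a graded right ideal; graded rank-one projectives are classified by a local shift integer $d_i\in\mathbb{Z}$ at each root $t_i$, and computing $\End_A(I)$ in the graded category identifies it with $A(v')$ for $v'(h)=\prod_i(h-t_i+d_i)$ — exactly Hodges' obvious equivalences, for which the quotient is trivial. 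If $[I]$ is not $T$-fixed, the family is non-constant and I would argue its image is uncountable, whence $\Pic(B)/\Aut(B)$ is uncountable and $B$ is isomorphic to no $A(v')$.

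The hard part will be twofold. First, the descent step inside $\Pic(A)=\Aut(A)$: passing from the localized bimodule back to a global one, i.e. ruling out exotic central invertible bimodules, is the genuine content and is where genericity of $v$ is consumed. Second, in the non-graded case, upgrading \emph{non-constant} to \emph{uncountable image} for $\lambda\mapsto[I\otimes_A A_{\sigma_\lambda}]$: one must prevent the countable group of graded symmetries — the discrete part of $\Pic(A)$ permuting roots by integers, together with $\Aut(B)$ — from collapsing the $\mathbb{C}^{*}$-family to countably many classes. This is precisely where the \textbf{very generic} hypothesis enters, ensuring that the stabilizer of $[I]$ in $T$ is countable and the orbit therefore genuinely uncountable; matching the graded classification exactly with the integer shifts $d_i$ is the remaining bookkeeping.
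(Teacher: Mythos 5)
Your reduction to endomorphism rings of right ideals, and your use of orbit--stabilizer (the $\Pic(A)$-orbit of $[I]$ is $\Pic(B)/\Aut(B)$ --- this is Lemma \ref{orbit} of the paper) are sound, but the two steps you yourself flag as ``the hard part'' are not deferrable technicalities: they are the entire content of the theorem, and the routes you sketch for them do not close. First, for $\Pic(A(v))=\Aut(A(v))$ you propose to localize an invertible bimodule to the skew Laurent ring $R=\mathbb{C}[h][x^{\pm 1};\sigma]$ and descend. Both halves are unproved: the claim that invertible $R$-bimodules are ``readily seen'' to be of the form $R_g$ is not obvious (this is a simple hereditary Noetherian domain, and for such rings rank-one projectives need not be free --- $A(v)$ itself is the standard counterexample), and the descent from $R$ back to $A(v)$ is exactly where exotic bimodules could live; you acknowledge this but offer no mechanism. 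The paper's mechanism is reduction mod $p\gg 0$: in characteristic $p$ the center of $A(\bar v)$ is the Poisson algebra $B(v^{[p]})$, the support of an invertible bimodule is the graph of a Poisson automorphism of the center, and the characteristic-$p$ Bavula--Jordan theorem (Theorem \ref{p-BJ}) pins that automorphism down; this yields normality of $\Aut$ in $\Pic$ (Theorem \ref{normality}) and, combined with the quiver-variety argument below, the equality.

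Second, and more seriously, your dichotomy rests on the claim that if $[I]$ is not fixed by the torus $T=\mathbb{C}^*$ then its $T$-orbit is uncountable. This does not follow from ``the stabilizer is a proper subgroup'': as an abstract group $\mathbb{C}^*$ is divisible and surjects onto $\mathbb{Q}$, so it has proper subgroups of countably infinite index; a non-fixed class could a priori have a countably infinite orbit, and nothing in your set-up supplies the algebraic rigidity needed to exclude this. You then propose to close the gap by invoking a ``very generic'' hypothesis --- but Theorem \ref{main} assumes only that $v$ is generic; ``very generic'' occurs in the paper only in Theorem \ref{kleinian} and in one branch of Theorem \ref{normality}. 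So even if your sketch were completed, it would prove a strictly weaker statement than the one at hand. The paper replaces the torus by the full automorphism group and uses two external inputs for which your proposal has no substitute: the $\Aut(A(v))$-equivariant bijection of Berest--Chalykh--Eshmatov \cite{BCE,E} between iso-classes of indecomposable projectives and quiver varieties, and the transitivity theorem of \cite{CEET}, which says that $\Aut(A(v))$ acts transitively on each such variety. Orbits are then literally the quiver varieties: uncountable whenever of positive dimension, while the zero-dimensional ones form a single $W_\infty$-orbit of $\theta=(1,0,\dots,0)$, which via the reflection-functor compatibility (Lemma \ref{commute}) produces exactly the integer-shifted algebras $A(v')$. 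Your graded-ideal computation (essentially Hodges \cite{H2}) correctly describes the $B\cong A(v')$ that do arise, but without the transitivity input the trichotomy ``$T$-fixed / countable orbit / uncountable orbit'' cannot be collapsed to the stated dichotomy.
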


A similar result is also obtained in the upcoming paper \cite{CEEF2}.

A few comments about the above theorem are in order. The case of linear
$v$ corresponds to the first Weyl algebra $A(v)\cong A_1(\mathbb{C}),$ in
which case the above result is due to Stafford \cite{S}; in fact, he
showed that the result holds in arbitrary characteristic. In particular,
it was Stafford who emphasized the importance of the invariant
$\Pic(R)/\Aut(R)$ to distinguish between non-isomorphic Morita equivalent
domains. The case of quadratic $v$ was proved in \cite{CEEF1}.

Our proof crucially relies on two nontrivial results in the literature:
the first being the $\Aut(A(v))$-equivariant description of isomorphism
classes of ideals in $A(v)$ in terms of certain quiver varieties due to
F.~Eshmatov \cite{E} and Berest--Chalykh--Eshmatov \cite{BCE}; and the
second one being the transitivity of the action of $\Aut(A(v))$ on those
varieties proved in  \cite{CEET}.

Noncommutative deformations of all Kleinian singularities, i.e.\ the
algebras $\mathcal{O}^{\lambda}$ (which incorporate generalized Weyl
algebras $A(v)$), were introduced by Crawley-Boevey and Holland
\cite{CBH}.
We are able to show an analogous result for Morita equivalences between
these algebras provided that the parameter $\lambda$ is very generic (its
entries are algebraically independent over $\overline{\mathbb{Q}}$), see
Theorem \ref{kleinian}.

\section{Noncommutative deformations of general Kleinian singularities}

We start by briefly recalling the definition of deformed preprojective
algebras introduced by Crawley-Boevey and Holland \cite{CBH}. Let $Q$ be
an arbitrary finite quiver with the set of vertices $I$, and let
$\overline{Q}$ be its double. For each $i\in I$, denote by $e_i$ the
corresponding idempotent. Let $\lambda\in\mathbb{C}^I.$ Then the deformed
preprojective algebra $\Pi^{\lambda}(Q)$ with parameter $\lambda$ is
defined as the quotient of the path algebra of $\overline{Q}$ by the
following relations:
\[
\sum_{a\in Q}[a, a^*]=\sum_{i\in I} \lambda_ie_i.
\]

Next, we need to recall the reflection functors (for $\lambda_i\neq 0$)
\[
E_i:\Pi^{\lambda}(Q) \Mod \to \Pi^{r_i(\lambda)}(Q) \Mod
\]
satisfying $E_i^2={\rm Id}$, introduced by Crawley-Boevey and Holland
\cite{CBH}. First, they define the symmetrized Ringel form $(\cdot,\cdot)
: \mathbb{Z}^I \times \mathbb{Z}^I \to \mathbb{Z}$, via:
\[
(\alpha, \beta) = \langle \alpha, \beta \rangle + \langle \beta, \alpha
\rangle, \quad \text{where} \quad
\langle \alpha, \beta \rangle := \sum_{i \in I} \alpha_i \beta_i -
\sum_{a \in Q} \alpha_{t(a)} \beta_{h(a)}.
\]
Next, define a simple reflection for a given loop-free vertex $i\in I$ as
follows:
\[
s_i:\mathbb{Z}^{I}\to\mathbb{Z}^I, \quad s_i(\alpha)=\alpha-(\alpha,
\epsilon_i)\epsilon_i,
\]
where $\epsilon_i$ is the $i$-th coordinate vector, and the corresponding
reflection $r_i:\mathbb{C}^I\to \mathbb{C}^I$ defined with the property
that $r_i(\lambda)\cdot\alpha=\lambda\cdot s_i(\alpha),$ explicitly 
\[
r_i(\lambda)_j=\lambda_j-(\epsilon_i, \epsilon_j)\lambda_i.
\]
One now defines the Weyl group $W$ of $Q$ as the group of automorphisms of
$\mathbb{Z}^I$ generated by $s_i, i\in I.$
Note that $W$ acts on $\mathbb{C}^I$ by
$w(\lambda)\cdot\alpha=\lambda\cdot w^{-1}(\alpha)$
for all $w\in W, \lambda\in\mathbb{C}^I, \alpha\in \mathbb{Z}^{I}.$

Given a left $\Pi^{\lambda}$-module $M$, we put $M_i=e_iM$, where recall
that $e_i\in\Pi^{\lambda}$ denotes the idempotent of the vertex $i\in I.$
So, $M=\bigoplus_{i\in I}M_i.$ Then $(E_i(M))_j=M_j$ for $j\neq i.$ Also,
$(E_i(M))_i$ is a direct summand of $\bigoplus_{a\in Q, h(a)=i}
M_{t(a)}.$
By composing reflection functors, for any element of the Weyl group $w\in
W$, we get the functor, which is an equivalence of categories
\[
E_w:\Pi^{\lambda} \Mod \to \Pi^{w(\lambda)} \Mod.
\]

Given $\alpha\in\mathbb{N}^{I},$ denote by $M_{Q}(\lambda, \alpha)$ the
variety of isomorphism classes of $\Pi^{\lambda}(Q)$-modules of dimension
$\alpha.$ Then for each $w\in W,$ the translation functor $E_w$ induces
an isomorphism of algebraic varieties 
\[
R_w: M_Q(\lambda, \alpha)\cong M_Q(w(\lambda), w(\alpha)).
\]

Let $\lambda\in Z(\mathbb{C}[\Gamma]),$ where $\Gamma$ is a finite
subgroup of $SL_2(\mathbb{C}).$
Then to this datum Crawley-Boevey and Holland \cite{CBH} associated an
algebra $\mathcal{O}^{\lambda}$, which is a noncommutative deformation of
the Kleinian singularity $\mathbb{C}^2/\Gamma.$
So, $\mathcal{O}^{\lambda}$ is a filtered algebra such that its
associated graded algebra is isomorphic to
$\mathcal{O}(\mathbb{C}^2/\Gamma).$
Recall the definition of $\mathcal{O}^{\lambda}.$ At first, one defines
the corresponding symplectic reflection algebra  
\[
H^{\lambda}=(\Gamma\ltimes \mathbb{C}\langle x, y\rangle)/([x,
y]-\lambda).
\]
Then $\mathcal{O}^{\lambda}=eH^{\lambda}e$, where
$e=\frac{1}{|\Gamma|}\sum_{g\in\Gamma}g.$
Recall that the algebras $H^{\lambda}$ admit well-known higher
dimensional generalizations called symplectic reflection algebras
introduced by Etingof and Ginzburg \cite{EG}.

Now let us recall how the algebras $\mathcal{O}^{\lambda}$ relate to
deformed preprojective algebras. Let $Q$ be the McKay graph of $\Gamma$
with arbitrary orientation, thus vertices of $Q$ correspond to simple
representations $V_i$ of $\Gamma$ (the trivial module corresponds to
vertex 0), and with an arrow $i\to j$ if $\Hom_{\Gamma}(V\otimes V_i,
V_j)\neq 0.$ 
Let $I$ denote the set of vertices of $Q.$
Then, an important result of \cite{CBH} asserts that 
$\Pi^{\lambda}(Q)$ is Morita equivalent to $H^{\lambda}$, and
$\mathcal{O}^{\lambda}\cong e_0\Pi^{\lambda}e_0.$
Here we first write $\lambda\in\mathbb{C}^I$ as
$\lambda=\sum_{i \in I} f_i \tau_i$, where
$\tau_i$ is the idempotent in
$Z(\mathbb{C}[\Gamma])$ corresponding to the simple $\Gamma$-module
$V_i.$ Now $\lambda$ has $i$-th entry $f_i\delta_i$, where $\delta_i :=
\dim(V_i)$.
Moreover, the algebras $\Pi^{\lambda}$ and $\mathcal{O}^{\lambda}$ are
Morita equivalent if $\lambda\cdot \alpha\neq 0$ for all Dynkin roots
$\alpha.$

From now on, we say that $\lambda\in\mathbb{C}^{I}$ is generic if
$\lambda\cdot\alpha\neq 0$ for any root $\alpha.$ Similarly,
$\lambda\in\mathbb{C}^{I}$ is said to be very generic if its coordinates
are algebraically independent over $\mathbb{Q}.$ Recall that
$\mathcal{O}^{\lambda}$ is commutative if and only if
$\lambda\cdot\delta=0$, and  $\text{gl.dim}(\mathcal{O}^{\lambda})=1$ if
and only if $\lambda$ is generic \cite[Theorem 0.4]{CBH}.

Let $W_{\text{ext}}$ be the group generated by the reflections $s_i$ and
the diagram automorphisms of $Q$. So $W_{\text{ext}}$ is the extended
affine Weyl group of the affine root system of $Q.$ Hence given a generic
$\lambda \in \mathbb{C}^{I},$ for any $w\in W_{\text{ext}}$, we have the
corresponding equivalence of categories
\[
\bar{E}_{w}:\mathcal{O}^{\lambda} \Mod \to \mathcal{O}^{w(\lambda)} \Mod.
\]
It is natural to pose the analogue of Hodges' question in this more
general setup. We show that the answer is yes for a very generic
$\lambda$ (defined just above). Here we also need to recall that as
observed by Boyarchenko \cite{B}, translations by 
\begin{equation}\label{EMitya}
d\in\Lambda := \lbrace \xi\in \mathbb{Z}^I,\
\xi \cdot \delta = 0 \rbrace
\end{equation}
provide Morita equivalent pairs of algebras $\mathcal{O}^{\lambda}$ for
generic $\lambda$ such that $\lambda \cdot \delta=1$:
\[
\mathcal{O}^{\lambda} \Mod \cong \mathcal{O}^{\lambda+d} \Mod.
\]

We need the following result which should be well-known. Its proof easily
follows along the lines of considering the standard Koszul resolution for
symplectic reflection algebras, see \cite[proof of Theorem 1.8]{EG}.

\begin{lemma}\label{H0}
Let $\lambda\in Z(\mathbb{C}[\Gamma])$ be very generic.
Then the natural map
\[
Z(\mathbb{C}[\Gamma])\to H^{\lambda}/[H^{\lambda}, H^{\lambda}]
\]
is surjective with kernel $\mathbb{C}\lambda.$
\end{lemma}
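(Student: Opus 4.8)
The plan is to recognise the target as the zeroth Hochschild homology $\mathrm{HH}_0(H^{\lambda})=H^{\lambda}/[H^{\lambda},H^{\lambda}]$ and to control it through the $\mathbb{C}[\Gamma]$-relative Koszul resolution of $H^{\lambda}$ from the proof of \cite[Theorem~1.8]{EG}. First I record two reductions. Because $[x,y]=\lambda$ is a defining relation, $\lambda\in[H^{\lambda},H^{\lambda}]$, so $\mathbb{C}\lambda$ lies in the kernel and the natural map descends to $Z(\mathbb{C}[\Gamma])/\mathbb{C}\lambda\to \mathrm{HH}_0(H^{\lambda})$. Since $\dim Z(\mathbb{C}[\Gamma])$ equals the number of conjugacy classes of $\Gamma$, which is the number $|I|$ of simple $\Gamma$-modules, the whole statement is equivalent to the conjunction of surjectivity and the dimension equality $\dim \mathrm{HH}_0(H^{\lambda})=|I|-1$.

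I would establish surjectivity by an elementary filtered reduction using the PBW isomorphism $H^{\lambda}\cong\mathbb{C}[x,y]\otimes\mathbb{C}[\Gamma]$. The identity $[\gamma\delta,\delta^{-1}]=\gamma-\delta^{-1}\gamma\delta$ shows $\gamma\equiv\delta^{-1}\gamma\delta$ modulo commutators, so it suffices to reduce every monomial $x^{i}y^{j}\gamma$ with $i+j>0$ to strictly smaller filtration degree. For $\gamma=1$ this is immediate from the leading symbol of the commutator, namely the symplectic Poisson bracket on $\mathbb{C}[x,y]$ with $\{x,y\}=\lambda_{1}\neq 0$ (very genericity forces the identity-coefficient of $\lambda$ to be nonzero): the Hamiltonian vector fields $\{x,-\},\{y,-\}$ span $\mathbb{C}[x,y]$, so $x^{i}y^{j}$ is a commutator up to lower order. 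For $\gamma\neq 1$ one uses in addition that $1-\gamma$ is invertible on $V=\mathbb{C}x\oplus\mathbb{C}y$—as $\Gamma\subset SL_{2}(\mathbb{C})$ has no nontrivial element fixing a vector—so that $[v,x^{i}y^{j}\gamma]$ already recovers the top-degree part. The exact commutators produce, through the group-valued part of $\lambda$, corrections in lower-degree group sectors; iterating drives everything down to $\mathbb{C}[\Gamma]$ in degree $0$, hence to $Z(\mathbb{C}[\Gamma])$. This gives surjectivity and $\dim \mathrm{HH}_0(H^{\lambda})\le|I|-1$.

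It remains to prove the matching lower bound $\dim \mathrm{HH}_0(H^{\lambda})\ge|I|-1$, equivalently that no class function outside $\mathbb{C}\lambda$ becomes a commutator; this is the main obstacle. Here I would use the self-dual Koszul resolution
\[
0\to H^{\lambda}\otimes_{\mathbb{C}[\Gamma]}\wedge^{2}V\otimes_{\mathbb{C}[\Gamma]}H^{\lambda}\xrightarrow{d_{2}} H^{\lambda}\otimes_{\mathbb{C}[\Gamma]}V\otimes_{\mathbb{C}[\Gamma]}H^{\lambda}\xrightarrow{d_{1}} H^{\lambda}\otimes_{\mathbb{C}[\Gamma]}H^{\lambda}\to H^{\lambda}\to 0,
\]
which realises $H^{\lambda}$ as a Calabi--Yau algebra of dimension $2$. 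Applying $H^{\lambda}\otimes_{(H^{\lambda})^{e}}(-)$ presents $\mathrm{HH}_0(H^{\lambda})$ as the cokernel of the commutator map induced by $d_{1}$. Passing to the PBW filtration, whose associated graded is $A_{0}=\mathbb{C}[x,y]\rtimes\Gamma$, yields a spectral sequence abutting to $\mathrm{HH}_{\bullet}(H^{\lambda})$ whose first page is $\mathrm{HH}_{\bullet}(A_{0})$—given by the Alev--Farinati--Lambre--Solotar computation for crossed products, so that $\mathrm{HH}_{0}(A_{0})=\mathbb{C}[x,y]^{\Gamma}\oplus\bigoplus_{[g]\neq 1}\mathbb{C}$ since a nontrivial element of $\Gamma\subset SL_{2}(\mathbb{C})$ has no fixed vector—and whose differential is contraction against the Poisson bivector determined by $[x,y]=\lambda$. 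The crux is to show that, for very generic $\lambda$, this differential has maximal rank in every internal degree, so that the only surviving homology is spanned by the $|I|$ conjugacy-class (equivalently vertex-idempotent) classes subject to the single relation $\lambda\equiv 0$; algebraic independence of the coordinates of $\lambda$ is exactly what rules out any accidental linear relation among class sums other than the one coming from $\lambda$. Granting this maximal-rank statement, the cokernel is $Z(\mathbb{C}[\Gamma])/\mathbb{C}\lambda$, which together with surjectivity proves the lemma.
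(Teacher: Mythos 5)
Your strategy---the $\mathbb{C}[\Gamma]$-relative Koszul resolution together with the PBW-filtration spectral sequence whose first page is the Alev--Farinati--Lambre--Solotar computation of $\mathrm{HH}_\bullet(\mathbb{C}[x,y]\rtimes\Gamma)$---is exactly the route the paper itself gestures at (the paper gives no details beyond citing the Koszul resolution argument from the proof of Theorem 1.8 in \cite{EG}). Moreover, your first half is genuinely correct: $\lambda=[x,y]$ shows $\mathbb{C}\lambda\subseteq\ker$, and the filtered reduction argument for surjectivity works, since $1-\gamma$ is invertible on $V$ for $\gamma\neq 1$, and the identity coefficient of $\lambda$ equals $(\lambda\cdot\delta)/|\Gamma|$, which is nonzero for very generic $\lambda$. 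This gives the upper bound $\dim \mathrm{HH}_0(H^{\lambda})\leq |I|-1$.

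The gap is in the second half, which is where the entire content of the lemma sits. You reduce the lower bound to the assertion that the spectral-sequence differential ``has maximal rank in every internal degree'' and then write ``Granting this maximal-rank statement, \dots proves the lemma.'' That statement is never proved, and the sentence offered in its place---that algebraic independence of the coordinates of $\lambda$ ``rules out any accidental linear relation among class sums''---is a restatement of the conclusion $\ker\subseteq\mathbb{C}\lambda$, not an argument for it. Note also that the logic points the wrong way: a maximal-rank hypothesis only makes the surviving homology \emph{smaller}, i.e.\ it reproduces the upper bound you already have from surjectivity. What is actually needed is an exactness statement in internal degree zero: that the images of $d_1$ \emph{and of all higher differentials} $d_r$ meet the span of $1$ and the class sums $C_g=\sum_{h\in[g]}h$ in precisely the line $\mathbb{C}\bar{\lambda}$, no more. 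Proving this requires computing the differential on the internal-degree-two part of $\mathrm{HH}_1(\mathbb{C}[x,y]\rtimes\Gamma)=(\Omega^1_{\mathbb{C}[x,y]})^{\Gamma}$ including its components into the nontrivial group sectors (these are nonzero and delicate: for instance the chain $x\otimes y$ maps to the class of the full element $\lambda$, not merely to $\lambda_1\cdot 1$, so sectors mix), and then ruling out contributions from $d_r$, $r\geq 2$, into the same spot; alternatively one can invoke Van den Bergh duality $\mathrm{HH}_0\cong \mathrm{HH}^2$ and a known computation of $\mathrm{HH}^2(H^{\lambda})$, but some such argument must be supplied. None of it is in the proposal, and it is also exactly the place where genericity of $\lambda$ beyond $\lambda\cdot\delta\neq 0$ must be used. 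So as written you have proved surjectivity and $\mathbb{C}\lambda\subseteq\ker$, but not $\ker\subseteq\mathbb{C}\lambda$.
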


The next result is a direct corollary of using the Hattori--Stallings
traces just as was originally done by Hodges in \cite{H2}.
This technique was also used by Berest, Etingof, and Ginzburg to classify
Morita equivalent pairs of rational Cherednik algebras of $S_n$ for
generic parameters \cite{BEG}.
Our result is a generalization of a result by Richard--Solotar
\cite[Theorem 2.6.2]{RS} to noncommutative deformations of arbitrary
Kleinian singularities.

Given $\lambda\in\mathbb{C}^{I}$, define $\bar{\lambda}:=(\lambda_i,
i\neq 0)\in\mathbb{C}^{|I|-1}.$
Recall that given a finitely generated projective $A$-module $P$, its
Hattori--Stallings trace is defined as the image in $A/[A, A]=H_0(A)$ of
the trace of an idempotent corresponding to $P.$ Thus we have the trace
homomorphism
\[
tr = tr_A : K_0(A) \to H_0(A).
\]

\begin{prop}\label{traces}
Let $H^{\lambda}, H^{\lambda'}$ be Morita equivalent, with $\lambda \cdot
\delta = \lambda' \cdot \delta = 1.$
Then there exist $A\in GL_{n-1}(\mathbb{Z})$ and $d\in\mathbb{Z}^{n-1}$
-- where $n = |I|$ -- such that $\bar{\lambda'}=A\bar{\lambda}+d.$
\end{prop}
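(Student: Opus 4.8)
The plan is to run the Hattori--Stallings trace argument of Hodges, upgraded by the extra rigidity coming from the Euler form on $K_0$ together with Lemma \ref{H0}.

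First I would record the Morita-invariance package. A Morita equivalence between $H^{\lambda}$ and $H^{\lambda'}$ is implemented by an invertible bimodule $P$; it induces an isomorphism $F_\ast\colon K_0(H^{\lambda})\to K_0(H^{\lambda'})$, $[M]\mapsto [P\otimes_{H^{\lambda}}M]$, and, since $H_0(A)=A/[A,A]$ is the zeroth Hochschild homology and hence Morita invariant, an isomorphism $\phi\colon H_0(H^{\lambda})\to H_0(H^{\lambda'})$. Naturality of the Hattori--Stallings trace gives $\phi\circ tr_{H^{\lambda}}=tr_{H^{\lambda'}}\circ F_\ast$, so $\phi$ carries the image lattice $tr_{H^{\lambda}}(K_0(H^{\lambda}))$ onto $tr_{H^{\lambda'}}(K_0(H^{\lambda'}))$. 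Moreover $F_\ast$ is an isometry for the categorically defined, hence Morita-invariant, Euler pairing on $K_0$.

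Next I would identify all three ingredients explicitly. By Lemma \ref{H0}, $H_0(H^{\lambda})\cong Z(\mathbb{C}[\Gamma])/\mathbb{C}\lambda$; write $\pi_\lambda\colon Z(\mathbb{C}[\Gamma])\to H_0(H^{\lambda})$ for the projection, with kernel $\mathbb{C}\lambda$. Using the Morita equivalence $H^{\lambda}\sim\Pi^{\lambda}$ of \cite{CBH}, $K_0(H^{\lambda})\cong K_0(\Pi^{\lambda})$ is free on the classes $[\Pi^{\lambda}e_i]$ ($i\in I$), and the trace of $[\Pi^{\lambda}e_i]$ is $\pi_\lambda(\tau_i)$. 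Consequently $\phi\circ\pi_\lambda$ and $\pi_{\lambda'}\circ\tilde{g}$ are two $\mathbb{C}$-linear maps $Z(\mathbb{C}[\Gamma])\to H_0(H^{\lambda'})$ agreeing on the $\mathbb{C}$-spanning set $\{\tau_i\}$, where $\tilde{g}$ denotes the $\mathbb{C}$-linear extension of $F_\ast$ written as a matrix $g\in GL_{n}(\mathbb{Z})$ in the basis $(\tau_i)$; hence they agree on all of $Z(\mathbb{C}[\Gamma])$. Comparing kernels ($\mathbb{C}\lambda$ on the left, $\tilde{g}^{-1}\mathbb{C}\lambda'$ on the right) yields $\lambda'=c\,g\lambda$ for some scalar $c\in\mathbb{C}^\ast$.

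Finally I would pin down the scalar and descend to $\bar{\lambda}$. The Euler pairing on $K_0\cong\mathbb{Z}^{I}$ is the symmetrized Ringel form, whose radical is the line $\mathbb{Z}\delta$ spanned by the dimension vector $\delta=(\delta_i)$; since $F_\ast=g$ is an isometry it preserves this radical, so $g\delta=\pm\delta$. Feeding this into the normalization $\delta\cdot\lambda=\delta\cdot\lambda'=1$ forces $c=\pm1$, so that $\pm g\in GL_n(\mathbb{Z})$ preserves the affine hyperplane $\{\,x:\delta\cdot x=1\,\}$ on which $\lambda$ and $\lambda'$ live. Eliminating the redundant coordinate through $\lambda_0=1-\sum_{i\neq0}\delta_i\lambda_i$ then expresses the restriction of this integral map in the coordinates $\bar{\lambda}=(\lambda_i)_{i\neq0}$ as $\bar{\lambda'}=A\bar{\lambda}+d$ with $A\in GL_{n-1}(\mathbb{Z})$ and $d\in\mathbb{Z}^{n-1}$, the translation $d$ being exactly the affine contribution of the constant term $1$.

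The main obstacle is the step that fixes the scalar $c$. The pure trace-lattice matching only gives the projective relation $\lambda'=c\,g\lambda$, and very-genericity does not by itself exclude a transcendental $c$ (which would make the relation on $\bar{\lambda}$ genuinely fractional-linear rather than affine, since such maps are birational and preserve algebraic independence); what rules it out is that the Morita map must preserve the intrinsic radical $\mathbb{Z}\delta$ of the Euler form. This is precisely the point where the homological input and the compatibility of the trace computation with the dimension vector $\delta$ have to be handled with care. The secondary technical point is the explicit evaluation $tr[\Pi^{\lambda}e_i]=\pi_\lambda(\tau_i)$ together with the bookkeeping between the $\tau_i$-basis of $Z(\mathbb{C}[\Gamma])$ and the coordinates $\lambda_i=f_i\delta_i$.
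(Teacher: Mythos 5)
The first half of your argument --- Morita invariance of $K_0$ and $H_0$, naturality of the Hattori--Stallings trace, the use of Lemma \ref{H0} to identify $\ker\pi_\lambda=\mathbb{C}\lambda$, and the resulting projective relation $\lambda'=c\,g\lambda$ --- coincides with the paper's proof. The genuine gap is at the step you yourself single out as the main obstacle: fixing the scalar $c$. You do this by asserting that $F_\ast$ is an isometry for a ``categorically defined, hence Morita-invariant, Euler pairing on $K_0$'' equal to the symmetrized Ringel form. No such pairing is available here: the Euler characteristic $\sum_i(-1)^i\dim \mathrm{Ext}^i(P,Q)$ is not defined on $K_0(H^{\lambda})$ because Hom-spaces between nonzero projectives are infinite-dimensional over $\mathbb{C}$ (already $\Hom(H^{\lambda},H^{\lambda})=H^{\lambda}$), and one cannot instead pair against finite-dimensional modules, since for $\lambda\cdot\delta=1$ with $\lambda$ generic the algebra $H^{\lambda}$ has no nonzero finite-dimensional modules at all. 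The symmetrized Ringel form is a combinatorial form on $\mathbb{Z}^I$ written in the particular basis $[P_i]$, and the entire content of the proposition is to constrain the matrix $g$ of $F_\ast$ in that basis; postulating that $g$ is an isometry of this form assumes a constraint at least as strong as the one to be proved. (Even granting the isometry, radical-preservation gives $g\delta=\pm\delta$, whereas your normalization computation $\delta\cdot\lambda'=c\,(g^{T}\delta)\cdot\lambda$ needs $g^{T}\delta=\pm\delta$: the condition you obtain constrains $g$, the one you need constrains $g^{T}$.)

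The paper's substitute for this step is a linear functional rather than a bilinear form: the rank. Since $\lambda\cdot\delta=\lambda'\cdot\delta=1$, the equivalence $F$ descends to an equivalence $\mathcal{O}^{\lambda}\Mod\to\mathcal{O}^{\lambda'}\Mod$ between Noetherian domains, and such equivalences preserve generic (Goldie) ranks of projectives; defining $rank_{\lambda}(M):=rank_{\mathcal{O}^{\lambda}}(eM)$, so that $rank([P_i])=\delta_i$, rank-preservation of $F_\ast$ is exactly the statement $g^{T}\delta=\delta$ --- the correct, sign-free condition --- after which your elimination of $\lambda_0$ does produce $\bar{\lambda'}=A\bar{\lambda}+d$ with $A\in GL_{n-1}(\mathbb{Z})$ and $d\in\mathbb{Z}^{n-1}$. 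Separately, your trace evaluation must be corrected: the trace of $[P_i]$ (equivalently of $[\Pi^{\lambda}e_i]$) is $\pi_{\lambda}(\tau_i)/\delta_i$, not $\pi_{\lambda}(\tau_i)$. These agree only when all $\delta_i=1$ (type $A$), while the proposition concerns arbitrary $\Gamma$ and asserts integrality in the coordinates $\lambda_i=f_i\delta_i$; with your formula the argument yields an integral matrix in the $f$-coordinates, hence only a rational one in the coordinates $\bar{\lambda}$ in which the statement is made, so the factors $\delta_i$ are not optional bookkeeping but exactly what makes the stated integrality come out.
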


\begin{proof}
Recall that $\tau_i\in Z(\mathbb{C}[\Gamma])$ denotes the primitive
central idempotent corresponding to $V_i,$ and
$\lambda=\sum_{i \in I} f_i\tau_i,
\lambda'=\sum_{i \in I} f'_i\tau_i.$ Moreover, the vectors
$\lambda, \lambda'\in\mathbb{C}^I$ have $i$-th coordinate $f_i \delta_i,
f'_i \delta_i$ respectively. So by our assumption $\sum_i f_i \delta_i^2
= \sum_i f'_i \delta_i^2 = 1.$

Let $F:H^{\lambda} \Mod \to H^{\lambda'} \Mod$ be an equivalence of
categories, then $F$ preserves the Hattori--Stallings traces. Also, as
$F$ must descend to an equivalence $\mathcal{O}^{\lambda} \Mod \to
\mathcal{O}^{\lambda'} \Mod$, so $F$ must preserve the rank, where we
define:
$rank_{\lambda}(M) := rank_{\mathcal{O}^{\lambda}}(eM).$
It is well-known that $K_0(H^{\lambda})=K_0(\mathbb{C}[\Gamma])$ has a
basis $[P_i], i\in I$, where $P_i=H^\lambda
\otimes_{\mathbb{C}[\Gamma]}V_i$ \cite{BEG}.
Also, $rank([P_i])=\delta_i$ and $tr([P_i])=\tau_i/\delta_i.$ 

Denote by $\tilde{K}(H^{\lambda})$  the kernel of the rank homomorphism
$rank:K_0(H^{\lambda})\to\mathbb{Z}.$
Then $\lbrace [P_i]-\delta_i[P_0] =: \varepsilon_i, i>0\rbrace $ is a
$\mathbb{Z}$-basis of $\tilde{K}(H^\lambda).$ 
Since $\sum_{i \in I} f_i \tau_i=0$ in $H^{\lambda}/[H^{\lambda},
H^{\lambda}]$ and
\[
\tau_i=tr(\delta_i \varepsilon_i)+\delta_i^2\tau_0, \quad i>0,
\]
we conclude that (setting $tr_\lambda := tr_{H^\lambda}$ and
$tr_{\lambda'} := tr_{H^{\lambda'}}$)
\[
tr_{\lambda}^{-1}(e_0)=-\sum_{i>0}\delta_i f_i \varepsilon_i,\quad
tr_{\lambda'}^{-1}(e_0)=-\sum_{i>0}\delta_i f'_i \varepsilon_i.
\]
Since $rank(F(H^{\lambda}e_0))=1$,  the desired result follows from the
following commutative diagram
\center
\begin{tikzcd}[swap]\label{diagram}
    \tilde{K}_0(H^{\lambda}) \arrow{r}{K_0(F)} 
           \arrow{d}{\text{tr}_{\lambda}}
  & \tilde{K}_0(H^{\lambda'})  \arrow{d}[swap]{\text{tr}_{\lambda'}} \\  
    H_0(H^{\lambda}) \arrow{r}[swap]{H_0(F)} 
  & H_0(H^{\lambda'})                                     
\end{tikzcd}

\end{proof}

We next recall the following simple result. It should be well-known, but
we give a proof here for the sake of completeness.

\begin{lemma}\label{roots}
Let $R$ be an affine root system spanning an $\mathbb{R}$-vector space
$V.$ Let $T\in GL(V)$ such that there is a set $S\subset R$ containing
all imaginary roots and finitely many other roots, such that
$T(R\setminus S), T^{-1}(R\setminus S)\subset R.$ Then $T$ is an element
of the extended affine Weyl group of $R.$
\end{lemma}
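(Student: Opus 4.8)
The plan is to reconstruct the rigid structure of $R$ from the purely \emph{cofinite} hypothesis that $T,T^{-1}$ carry $R\setminus S$ into $R$, in three stages: first locate the imaginary direction, then descend to the underlying finite root system, and finally pin down the residual translation. Throughout I fix the minimal positive imaginary root $\delta$, so that $R_{\mathrm{im}}=\{n\delta:n\in\mathbb Z\setminus\{0\}\}$ and every real root has the form $\alpha+n\delta$ with $\alpha$ in the underlying finite (ADE) root system $\Phi$ and $n\in\mathbb Z$.

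\textbf{Step 1 (the imaginary direction).} The decisive point is to show $T(\delta)=\pm\delta$. The key geometric observation is that any line in $V$ meeting infinitely many roots must be parallel to $\delta$: since $\Phi$ is finite, infinitely many roots on a single line must share a common finite part $\alpha$, and the collinear points $\alpha+\mathbb Z\delta$ already determine that line. Now, because $S$ contains only finitely many real roots, for each fixed $\alpha\in\Phi$ all but finitely many $\alpha+n\delta$ lie in $R\setminus S$; their images $T(\alpha)+nT(\delta)$ are then roots lying on one line in direction $T(\delta)$, so the observation forces $T(\delta)=c\delta$. Comparing the spacing $c\delta$ of consecutive images with the fact that differences of roots on a fixed $\delta$-line lie in $\mathbb Z\delta$ gives $c\in\mathbb Z$; applying the same to $T^{-1}$ and using $c\,c'=1$ yields $c=\pm1$. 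I expect this to be the main obstacle, as it is the only place where the precise hypothesis ``$S$ contains all imaginary roots and only finitely many real ones'' is used essentially.

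\textbf{Step 2 (reduction to $\Phi$).} Since $T(\mathbb R\delta)=\mathbb R\delta$, the map $T$ descends to $\bar T\in GL(\bar V)$ on $\bar V:=V/\mathbb R\delta$, which I identify with $\operatorname{span}_{\mathbb R}\Phi$; under this identification the image of $R$ is $\Phi\cup\{0\}$ and that of $R_{\mathrm{re}}$ is $\Phi$. For $\alpha\in\Phi$ pick $n$ with $\alpha+n\delta\in R\setminus S$; then $T(\alpha+n\delta)\in R$ projects to $\bar T(\alpha)\in\Phi\cup\{0\}$, and $\bar T(\alpha)\neq0$ since $\bar T$ is invertible, whence $\bar T(\Phi)\subseteq\Phi$. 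The same for $T^{-1}$ shows $\bar T$ is a linear automorphism of the finite root system $\Phi$. By the classical description $\operatorname{Aut}(\Phi)=W(\Phi)\rtimes\operatorname{Out}(\Phi)$, with the Weyl part realized inside $W_{\text{ext}}$ by the reflections $s_i$ ($i\neq0$) and the outer part by the diagram automorphisms of $Q$; thus there is $w\in W_{\text{ext}}$ with $\bar w=\bar T$.

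\textbf{Step 3 (the translation).} Set $U:=w^{-1}T$, so $\bar U=\mathrm{id}$ and hence $U(\alpha)=\alpha+c(\alpha)\delta$ for a linear functional $c$ on $\operatorname{span}_{\mathbb R}\Phi$, while $U(\delta)=\pm\delta$. Integrality comes once more from the $\delta$-strings: $U$ sends almost every $\alpha+n\delta$ to a root $\alpha+(c(\alpha)\pm n)\delta$, forcing $c(\alpha)\in\mathbb Z$ for every root $\alpha$, i.e.\ $c$ is pairing with an element $\mu$ of the (co)weight lattice. Since every element of $W_{\text{ext}}$ fixes $\delta$, the relevant case is $U(\delta)=\delta$ (the orientation of $\delta$ being fixed in the intended application, where $T$ respects the level $\lambda\cdot\delta$); in that case $U$ is exactly the action on $V$ of the lattice translation $t_\mu$, which lies in $W_{\text{ext}}$ because translations by the weight lattice constitute precisely the length-zero part of $W_{\text{ext}}$ together with the affine Weyl group. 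Therefore $T=w\,t_\mu\in W_{\text{ext}}$, completing the argument.
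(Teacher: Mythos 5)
Your proof is correct and follows essentially the same route as the paper's: show $T(\delta)=\pm\delta$ using the fact that only lines in the $\delta$-direction can contain infinitely many roots, descend to the underlying finite root system to get $\bar T\in W(\Phi)\rtimes\mathrm{Out}(\Phi)$, lift to $W_{\text{ext}}$, and finish. If anything you are more complete than the paper, which compresses your Step 3 (the integrality of the residual translation and its identification with a weight-lattice translation) into a single ``Hence,'' and you rightly flag the $T(\delta)=-\delta$ case that the paper's ``without loss of generality'' glosses over: strictly speaking one only gets $T\in\pm W_{\text{ext}}$ (e.g.\ $T=-\mathrm{id}$ preserves $R$ but fixes no $\delta$), a discrepancy that is harmless because the scalar $t\in\mathbb{C}^*$ in Theorem \ref{kleinian} absorbs the sign.
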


\begin{proof}
Let $\delta$ denote the minimal imaginary positive root of $R$. Let $R'$
denote the finite root system whose affinization is $R.$
So, $R=(R'\times\mathbb{Z}\delta) \sqcup \mathbb{Z} \delta.$ 
Thus for all but finitely many $\alpha\in R\setminus\mathbb{Z}\delta$ we
have $T(\alpha)\in R.$
Recall that for given roots $\alpha,\beta,$ if $m\alpha+\beta\in R$ for
infinitely many $m$, then $\alpha$ must be a multiple of $\delta.$
This implies $T(\delta)=\pm\delta.$ Thus, without loss of generality
$T(\delta)=\delta.$ Then $\bar{T}:V/\mathbb{R}\delta\to
V/\mathbb{R}\delta$ is a linear transformation preserving the finite root
system $\bar{R} \cong R'.$ Now, it easily follows that $\bar{T}$ belongs
to the subgroup generated by the Weyl group and diagram automorphisms of
$\bar{R}.$
Hence $T$ belongs to the extended affine Weyl group of $R.$
\end{proof}

\begin{theorem}\label{kleinian}

Let $\mathcal{O}^{\lambda}$ and $\mathcal{O}^{\lambda'}$ be Morita
equivalent noncommutative deformations of a Kleinian singularity. If
$\lambda$ is very generic then $\lambda'=tw(\lambda)$ for some $w\in
W_{\text{ext}}$ and $t\in\mathbb{C}^*.$

\end{theorem}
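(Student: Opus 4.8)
The plan is to combine the Hattori--Stallings trace computation of Proposition~\ref{traces} with the abstract root-system rigidity of Lemma~\ref{roots}, using the very generic hypothesis to transport the integral affine structure. First I would reduce to the case $\lambda\cdot\delta=1$ by rescaling: since $\lambda$ is very generic, in particular $\lambda\cdot\delta\neq 0$, so after multiplying $\lambda$ by the scalar $t_0=(\lambda\cdot\delta)^{-1}\in\mathbb{C}^*$ we may assume $\lambda\cdot\delta=1$, and likewise the Morita equivalent partner $\mathcal{O}^{\lambda'}$ forces $\lambda'\cdot\delta\neq 0$ (otherwise $\mathcal{O}^{\lambda'}$ is commutative while $\mathcal{O}^{\lambda}$ is not), so we may rescale $\lambda'$ to satisfy $\lambda'\cdot\delta=1$ as well. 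The scalar $t$ in the conclusion will absorb these two rescalings. Under the genericity hypothesis the Morita equivalence of the $\mathcal{O}$'s lifts to a Morita equivalence of the corresponding $H^\lambda$'s via the Crawley--Boevey--Holland equivalence $\Pi^\lambda\sim H^\lambda$ and $\mathcal{O}^\lambda\cong e_0\Pi^\lambda e_0$, so Proposition~\ref{traces} applies and yields $A\in GL_{n-1}(\mathbb{Z})$ and $d\in\mathbb{Z}^{n-1}$ with $\bar{\lambda'}=A\bar{\lambda}+d$.

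The heart of the argument is to upgrade this single affine-integral relation between the two normalized parameters into the statement that the underlying linear map lies in $W_{\text{ext}}$. Here is where the very generic hypothesis does the real work. I would package the affine map $\bar\lambda\mapsto A\bar\lambda+d$ as a genuine element $T$ of the affine group acting on the space $V=\mathbb{R}^I/\mathbb{R}\delta$ (or its dual), identifying the integral translation lattice with $\Lambda$ of \eqref{EMitya} and the linear part $A$ with a transformation of the root lattice. The point is that the relation $\bar{\lambda'}=A\bar\lambda+d$ is not a single numerical coincidence: for \emph{every} $w\in W_{\text{ext}}$, both $\lambda$ and $w(\lambda)$ index Morita equivalent algebras (by Boyarchenko's translations together with the reflection functors $\bar E_w$), so $T$ must carry the entire $W_{\text{ext}}$-orbit structure on pairing values to itself. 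Concretely, for a root $\alpha$ the vanishing locus $\{\mu:\mu\cdot\alpha\in\mathbb{Z}\}$ is preserved under the correspondence, and since $\lambda$ has algebraically independent coordinates, $T$ must send the integral-pairing hyperplanes for roots to integral-pairing hyperplanes for roots. This is precisely the combinatorial input needed: $T$ permutes (up to the finitely many exceptions allowed in Lemma~\ref{roots}) the affine root hyperplanes, hence $T(R\setminus S),T^{-1}(R\setminus S)\subset R$ for a set $S$ containing the imaginary roots and finitely many others.

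With that verified, Lemma~\ref{roots} immediately gives $T\in W_{\text{ext}}$, and unwinding the identification between the affine action on $V$ and the action on parameters $\lambda$ produces $w\in W_{\text{ext}}$ with $\overline{\lambda'}=\overline{w(\lambda)}$ at the normalized level; restoring the rescalings gives $\lambda'=t\,w(\lambda)$ for the appropriate $t\in\mathbb{C}^*$. I expect the main obstacle to be the second paragraph: making rigorous the passage from the bare affine equation $\bar{\lambda'}=A\bar\lambda+d$ of Proposition~\ref{traces} to the hypothesis of Lemma~\ref{roots}, i.e.\ showing that $A$ (together with $d$) genuinely preserves the affine root system rather than merely relating two points. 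The very generic hypothesis is essential precisely here, because it guarantees that the only $\mathbb{Z}$-linear relations among the pairings $\lambda\cdot\alpha$ are the ones forced by relations among the roots $\alpha$ themselves; this rigidity is what forbids accidental integral coincidences and pins $A$ down to a root-system automorphism. The final reconciliation of the trace-theoretic $GL_{n-1}(\mathbb{Z})$ data with the $W_{\text{ext}}$-action on the $\delta$-normalized parameter space is then a bookkeeping matter, matching the translation lattice $\Lambda$ to $d$ and the finite Weyl/diagram part to $A$ modulo translations.
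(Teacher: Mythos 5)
Your skeleton matches the paper's proof (normalize so that $\lambda\cdot\delta=\lambda'\cdot\delta=1$, apply Proposition~\ref{traces} to get $\lambda'=A\lambda+d$, absorb $d\in\Lambda$ into $W_{\text{ext}}$ via Boyarchenko's translations, and finish with Lemma~\ref{roots}), but the step you yourself flag as ``the main obstacle'' is a genuine gap, and the mechanism you propose for closing it does not work. Your claim is that very genericity forces the linear part $A$ to preserve the root hyperplane arrangement, because ``the only $\mathbb{Z}$-linear relations among the pairings $\lambda\cdot\alpha$ are the ones forced by relations among the roots.'' But this rigidity is vacuous here: for \emph{any} $A\in GL_n(\mathbb{Z})$, the vector $A\lambda$ of a very generic $\lambda$ satisfies $(A\lambda)\cdot\beta=\lambda\cdot(A^{T}\beta)\neq 0$ for every root $\beta$, since $A^{T}\beta$ is a nonzero integer vector. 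So both $\lambda$ and $\lambda'=A\lambda$ exhibit no integral coincidences whatsoever, no matter what $A$ is; nothing about the pair of points alone can distinguish an element of $W_{\text{ext}}$ from an arbitrary integer matrix. Likewise, the appeal to the ``$W_{\text{ext}}$-orbit structure'' does not help: applying Proposition~\ref{traces} to each pair $(w(\lambda),\lambda')$ produces a separate, a priori unrelated matrix $A_w$, and there is no single ``correspondence'' on parameter space that is known to preserve the loci $\{\mu:\mu\cdot\alpha\in\mathbb{Z}\}$.

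The missing idea in the paper is a spreading-out and specialization argument, and this is where very genericity actually earns its keep. The Morita equivalence between $\mathcal{O}^{\lambda}$ and $\mathcal{O}^{\lambda'}$ is defined over a finitely generated subring $S\subset\mathbb{C}$ containing the coordinates of $\lambda$; because those coordinates are algebraically independent, $S$ contains a polynomial ring on them, so ring homomorphisms $\chi:S\to\overline{\mathbb{Q}}$ can place $\chi(\lambda)$ essentially anywhere. If $T(v)=Av$ were not in $W_{\text{ext}}$, Lemma~\ref{roots} (applied in the contrapositive) gives infinitely many non-imaginary roots $\alpha$ with $T(\alpha)$ not a root, and one can then choose $\chi$ so that $\chi(\lambda)\cdot\alpha=0$ while $\chi(\lambda')=A\chi(\lambda)$ pairs nontrivially with every root. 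Since Morita equivalence survives this base change and global dimension is a Morita invariant, the resulting inequality $\text{gl.dim}(\mathcal{O}^{\chi(\lambda)})\neq 1=\text{gl.dim}(\mathcal{O}^{\chi(\lambda')})$ (by \cite[Theorem 0.4]{CBH}) is a contradiction. In short: the constraint on $A$ comes not from the arithmetic of the single point $\lambda$, but from degenerating the Morita equivalence itself to special parameter values; without this step your argument cannot get off the ground.
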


\begin{proof}

Without loss of generality we may assume that
$\lambda\cdot\delta=\lambda'\cdot\delta=1.$
By Proposition \ref{traces}, we have that $\lambda'=A\lambda+d$ for some
$A\in GL_{n}(\mathbb{Z})$ and $d\in\Lambda$ (see~\eqref{EMitya}), where
$n=|I|.$
Now, recall that translations by $d\in\Lambda$ induce Morita equivalences
and belong to $W_{\text{ext}}.$ Thus, without loss of generality
we may assume that  $\lambda'=A\lambda.$
It follows that there exists a finitely generated ring
$S\subset\mathbb{C}$, containing $\lambda_0,\cdots, \lambda_{n-1},$ such
that $\mathcal{O}^{\lambda}, \mathcal{O}^{\lambda'}$
are Morita equivalent over $S.$
Denote by $T:\mathbb{R}^n\to\mathbb{R}^n$ the linear transformation
$T(v)=Av.$
By Lemma \ref{roots}, we may assume without loss of generality that for
infinitely many non-imaginary roots $\alpha$, $T(\alpha)$ is not a root.
It follows that there exists a homomorphism $\chi:S\to\mathbb{\bar{Q}}$
and a non-imaginary root $\alpha,$ so that
$\chi(\lambda)\cdot \alpha=0$, while $\chi(\lambda')\cdot \beta\neq 0$
for any root $\beta.$ So,
$\mathcal{O}^{\chi(\lambda)},\mathcal{O}^{\chi(\lambda')}$
are (still) Morita equivalent, while
$\text{gl.dim}(\mathcal{O}^{\chi(\lambda)})\neq
\text{gl.dim}(\mathcal{O}^{\chi(\lambda')}),$ 
a contradiction.
\end{proof}

\section{Characteristic $p\gg 0$ approach}

The goal of this section is to prove Theorem \ref{normality} below, which
plays a key role in proving our main result.
Its proof is based on the reduction mod $p\gg 0$ method. To state the
result, we need to introduce the following subgroup of $\Aut(A(v)).$
Let $\bold{k}$ be a field, $v\in\bold{k}[h].$
Let $G$ be the subgroup of $\Aut(A(v))$ generated by 
$\exp(\lambda \ad x^n), \exp(\lambda \ad y^n)$ for $\lambda\in \bold{k},
n\geq 1,$ 
and the diagonal automorphisms 
$\phi_t, t\in\bold{k}^*,$ defined as 
\[
\phi_t(x)=tx, \quad \phi_t(y)=t^{-1}y, \quad \phi_t(h)=h.
\]
Notice that the definition of $G$ makes sense when
$\ch \bold{k}>0$ also.
Following \cite{BJ}, we call a polynomial $v\in\bold{k}[h]$ reflexive if
$v(h)=(-1)^dv(a-h)$ for some $a\in\bold{k}, d=\deg(h).$
If $v$ is reflexive, then we have an involution $\Omega\in \Aut(A(v))$
defined as follows:
\[
\Omega(x)=y,\quad \Omega(y)=(-1)^dx,\quad \Omega(h)=1+a-h.
\]
It was proved by Bavula and Jordan \cite[Theorem 3.29]{BJ} that if $v$
is reflexive, then $\Aut(A(v))$ is generated by $G$ and $\Omega$,
otherwise $\Aut(A(v))=G.$ Recall that $\Omega$ normalizes $\Gamma.$

For $v(h) = \prod_i (h-c_i)\in \mathbb{C}[h]$, let $S\subset\mathbb{C}$
be a finitely generated subring of $\mathbb{C}$ containing all roots of
$v.$
Given a base change $S\to\bold{k}$ to a characteristic $p$ field
$\bold{k},$ we set $v^{[p]}(h) := \prod_i(h-(c_i^p-c_i))\in
\bold{k}[h].$

We also need to recall the commutative deformations of type $A$ Kleinian
singularities: the commutative Poisson $\bold{k}$-algebra $B(v)$
generated by $x, y, h$ with the relation $xy=v(h)$ and the following
Poisson brackets 
\[
\lbrace h, x\rbrace=x, \quad \lbrace h, y\rbrace=-y, \quad \lbrace x,
y\rbrace=v'(h).
\]

We need the following characteristic $p\gg 0$ analogue of the Poisson
version of the aforementioned theorem of Bavula--Jordan on automorphisms
of $A(v).$
(The original proof in \cite{BJ} also goes through directly for this
analogue.)
If $v$ is reflexive, then $B(v)$ admits the Poisson automorphism $\Omega$
defined just as for $A(v).$
The subgroup $W\leq \Aut(B(v))$ of Poisson automorphisms is defined
similarly to the subgroup $G\leq\Aut(A(v)).$ Now one has:

\begin{theorem}\label{p-BJ}
Given a natural number $d$ there exists $N\in \mathbb{N},$ such that for
all primes $p>N,$ the following holds.
Let $\bold{k}$ be an algebraically closed field of characteristic $p$,
and let $v\in\bold{k}[h].$
Let $\phi:B(v)\to B(v)$ be a Poisson $\bold{k}$-algebra automorphism,
such that $\deg(\phi(x)), \deg(\phi(y)), \deg(v)\leq d.$
Then $\phi$ belongs to the subgroup generated by $\Omega$ and $W$ if $v$
is reflexive, otherwise $\phi\in G.$
\end{theorem}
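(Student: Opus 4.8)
The plan is to deduce this from the characteristic-zero Poisson Bavula--Jordan theorem by a reduction-mod-$p$ (spreading-out) argument, the whole point being to produce a bound $N$ that depends only on $d$ and not on $\bold{k}$ or on $v$. First I would record the characteristic-zero input in \emph{quantitative} form. Over $\mathbb{C}$ (hence, by the Lefschetz principle, over any algebraically closed field of characteristic $0$) every Poisson automorphism of $B(v)$ lies in $W$, or in the group generated by $W$ and $\Omega$ when $v$ is reflexive; as the paper notes, the argument of \cite{BJ} applies verbatim. What I actually need is that the degree-reduction procedure in that argument expresses any Poisson automorphism $\phi$ with $\deg\phi(x),\deg\phi(y),\deg v\le d$ as a product of at most $L=L(d)$ generators of the form $\exp(\lambda\,\ad x^n)$, $\exp(\lambda\,\ad y^n)$, $\phi_t$, together with at most one factor $\Omega$, where moreover every exponent satisfies $n\le d$. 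Since each reduction step strictly lowers a degree invariant, such an $L(d)$ exists and depends only on $d$; the same factorization bounds the degree of $\phi^{-1}$ by some $d'=d'(d)$.

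Next I would turn everything into an algebraic family over $\mathbb{Z}$. A datum $(v,\phi,\psi)$ with $\deg v\le d$, $\deg\phi(x),\deg\phi(y),\deg\phi(h)\le d$ and $\deg\psi\le d'$ is a point of an affine space $\mathbb{A}^M_{\mathbb{Z}}$, and the conditions that $\phi,\psi$ be Poisson endomorphisms of $B(v)$ (preserving $xy=v(h)$ and the three brackets) and mutually inverse are polynomial equations with integer coefficients, cutting out a closed subscheme $Z$. The crucial observation is that for primes $p>d$ the generators above are given by \emph{characteristic-free} polynomial formulas: the Hamiltonian derivations $\ad x^n,\ad y^n$ are locally nilpotent, with $(\ad x^n)^k(y)$ involving only $v^{(k)}$, so the exponential series terminates at order $\le d$ and the only factorials that occur are $k!$ with $k\le d$, which are invertible. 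Hence the locus $\mathcal{G}\subset Z$ of data for which $\phi$ is a product of at most $L(d)$ such generators is the image of a morphism from a parameter space in the variables $(\lambda_i,t_i,n_i)$ and a choice of finitely many factor-types, and so is constructible over $\mathbb{Z}[1/d!]$ by Chevalley's theorem.

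I would then compare fibres. The counterexample locus $\mathcal{C}=Z\setminus\mathcal{G}$ is constructible over $\mathbb{Z}[1/d!]$, and by the quantitative characteristic-zero statement its fibre over the generic point is empty. A constructible subset of $\spec\mathbb{Z}[1/d!]$ with empty generic fibre is supported on finitely many closed points, so there is an $N$, which we enlarge to satisfy $N\ge d$, such that $\mathcal{C}$ has empty fibre over every prime $p>N$. For such $p$ this says exactly that any Poisson automorphism of $B(v)$ subject to the degree bounds (and with inverse of degree $\le d'$) is a bounded product of generators, i.e.\ lies in $W$, resp.\ in the group generated by $W$ and $\Omega$. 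Equivalently, one may package the degree-$d$ assertion as a single first-order sentence in the language of fields and invoke completeness of the theory of algebraically closed fields together with the transfer from characteristic $0$ to $p\gg0$.

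I expect the main obstacle to lie precisely in making ``$\phi$ is an automorphism'' a \emph{bounded}, and hence constructible/first-order, condition. This requires the uniform factorization length $L(d)$, for which one must check that the Bavula--Jordan reduction terminates in a number of steps controlled only by $d$ with all intermediate exponents $n\le d$, and, more delicately, a uniform bound $\deg\phi^{-1}\le d'(d)$ that is valid in characteristic $p$ as well; otherwise a characteristic-$p$ automorphism of small forward degree but large inverse degree would escape the family $Z$. The natural way to secure this inverse-degree bound is a leading-term argument: the bracket relation $\{x,y\}=v'(h)$ rigidly constrains the top-degree parts of $\phi(x),\phi(y),\phi(h)$, and the resulting control of $\phi^{-1}$ is characteristic-free once $p>d$, in the spirit of the degree behaviour of automorphisms of a polynomial ring in two variables. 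This is also what forces the harmless restriction $p>d$ and is exactly the reason the statement is only claimed for $p\gg0$.
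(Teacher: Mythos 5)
Your route is genuinely different from the paper's: the paper proves this theorem by asserting that Bavula--Jordan's degree-reduction argument \cite{BJ} runs verbatim in characteristic $p$ once $p$ exceeds all degrees involved (so that the denominators $k!$, $k\le d$, in the truncated exponentials are units, etc.), whereas you transfer the characteristic-zero statement via spreading out, Chevalley constructibility, and emptiness of the generic fibre. The skeleton of your argument is sound, and the two quantitative inputs you isolate are exactly what it needs: (i) a factorization length $L(d)$ with bounded exponents in characteristic zero, which is plausibly extractable from the BJ reduction since each step strictly drops a degree invariant; and (ii) a bound $\deg\phi^{-1}\le d'(d)$, uniform over all fields of characteristic $p>N_0$, without which characteristic-$p$ automorphisms of small forward degree but large inverse degree escape your family $Z$, and the empty-generic-fibre argument says nothing about them.

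The genuine gap is (ii), and your proposed way of securing it is the problematic part. You assert that a leading-term analysis using $\{x,y\}=v'(h)$ gives characteristic-free control of $\phi^{-1}$ once $p>d$; but controlling leading terms of Poisson automorphisms of $B(v)$ in characteristic $p$ is precisely the substance of the direct BJ-in-characteristic-$p$ argument that the paper invokes --- it is exactly where one must rule out interference from the large Poisson center of $B(v)$ in characteristic $p$ (which contains $x^p$, $y^p$, $h^p$) and from candidate automorphisms built out of it, and the claim that this control is ``characteristic-free'' is precisely what is at stake. If you can carry out that leading-term analysis uniformly in $p$, you can simply run the whole BJ reduction in characteristic $p$ and prove the theorem directly, making the entire transfer apparatus superfluous; if you cannot, the proof is incomplete at its load-bearing step. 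The non-circular way to fill the gap is to get (ii) from general effective elimination theory rather than from the Poisson structure: an automorphism of the affine surface $xy=v(h)\subset\mathbb{A}^3$ given by polynomials of degree $\le d$ has inverse of degree bounded by a function of $d$ alone, via a graph-plus-elimination (Gr\"obner/Hermann or B\'ezout-type) bound, and such bounds are uniform over all fields, independent of the characteristic and of \cite{BJ}. With that substitution, together with an actual verification of (i) from the BJ proof, your proposal would become a correct proof, and one genuinely different from the paper's.
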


Recall that the center of $A(v)$ is trivial for $\ch \bold{k} =0$,
while in the case of $\ch \bold{k}=p$ it is large and is described as
follows \cite{BC}. The center  $Z(A(v))$ is generated by $x^p, y^p,
h^p-h$ subject to the relation
\[
x^py^p=\prod_i(h^p-h-(c_i^p-c_i)),
\]
where $v(h)=\prod_i(h-c_i).$

Recall that given an associative flat $\mathbb{Z}$-algebra $R$ and a
prime number $p,$ the center $Z(R/pR)$ of its reduction mod $p$ acquires
a natural Poisson bracket (see for example \cite[Section 5.2]{BK}), which
we refer to as the reduction mod $p$ bracket, defined as follows. Given
$a, b\in Z(R/pR)$, let $z, w\in R$ be their respective lifts. 
Then the Poisson bracket $\lbrace a, b \rbrace$ is defined to be
\[
\frac{1}{p}[z, w] \mod p\in Z(R/pR).
\]

So, given a base change $S\to\bold{k}$ to a characteristic $p$ field
$\bold{k}$, where $S\subset\mathbb{C}$ is a finitely generated ring
containing all roots of $v,$ then $Z(A(v)\otimes_S\bold{k})$ is equipped
with the reduction mod $p$ Poisson bracket as defined above and we get an
isomorphism
\[
Z(A(v)\otimes_S\bold{k})\cong B(v^{[p]}),
\]
preserving the Poisson brackets with the minus sign (see \cite[Section
5.2]{BK} for the case of the Weyl algebra; the general case is similar).

\begin{remark}
It follows from the realization of $\mathcal{O}^{\lambda}$ (for
$\lambda\cdot\delta=1)$ as a quantum Hamiltonian reduction \cite{Hol}
that in characteristic $p\gg 0$ we have that 
$$Z(\mathcal{O}^{\lambda}\mod p)\cong \mathcal{O}^{\lambda^{[p]}},$$
 where
$\lambda^{[p]}_i=\lambda_i^p-\lambda_i.$ So,
$\lambda^{[p]}\cdot\delta=0.$ Recall that $\mathcal{O}^{\lambda}$ is
commutative if and only if $\lambda\cdot\delta=0.$ In particular, if
$\mathcal{O}^{\lambda_1}, \mathcal{O}^{\lambda_2}$ are Morita equivalent,
then we have an isomorphism of Poisson algebras
$\mathcal{O}^{\lambda_1^{[p]}}\cong \mathcal{O}^{\lambda_2^{[p]}}$ for
$p\gg 0.$
\end{remark}

\begin{theorem}\label{normality}
Let $v\in\mathbb{C}[h]$ be arbitrary.
Then $\Aut(A(v))$ is a normal subgroup of $\Pic(A(v)).$
Moreover, if $v$ is reflexive, or the roots of $v$ are algebraically
independent over $\overline{\mathbb{Q}}$, then $\Pic(A(v))=\Aut(A(v)).$

\end{theorem}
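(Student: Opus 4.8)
The plan is to leverage the large characteristic-$p$ center, on which an invertible bimodule must act, and then to invoke Theorem \ref{p-BJ}. Fix $P\in\Pic(A(v))$. Choosing a finitely generated subring $S\subset\mathbb{C}$ containing all roots of $v$ over which $P$ admits an invertible bimodule model $P_S$, for every base change $S\to\bold{k}$ to an algebraically closed field of characteristic $p\gg0$ the reduction $P_{\bold{k}}:=P_S\otimes_S\bold{k}$ is an invertible $A(v)\otimes_S\bold{k}$-bimodule. Since the center $Z(A(v)\otimes_S\bold{k})\cong B(v^{[p]})$ is Morita invariant, $P_{\bold{k}}$ induces an algebra automorphism $\sigma_P$ of $B(v^{[p]})$; as Morita equivalences preserve the reduction mod $p$ Poisson structure (it is intrinsic to the associative deformation), $\sigma_P$ is in fact a Poisson automorphism. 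For each sufficiently large $p$ the assignment $P\mapsto\sigma_P$ is a group homomorphism $\Phi$ from $\Pic(A(v))$ to the group of Poisson automorphisms of $B(v^{[p]})$, compatible with $\Aut(A(v))\hookrightarrow\Pic(A(v))$: an automorphism $\alpha$ goes to its restriction to the center, which under the Bavula--Jordan description of $\Aut(A(v))$ is computed from the defining parameters of $\alpha$ reduced mod $p$. In particular $\Phi(\Aut(A(v)))$ equals the geometric subgroup $\mathcal{W}$, namely $\langle\Omega,W\rangle$ if $v$ is reflexive and $W$ otherwise, and $\Phi|_{\Aut(A(v))}$ is injective for $p\gg0$.

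The crucial point, and the step I expect to be the main obstacle, is a bound $\deg\sigma_P(x),\deg\sigma_P(y)\le d$ with $d=d(P,v)$ independent of $p$; here the degrees are taken in the generators $x,y,h$ of $B(v^{[p]})$, which correspond to the central elements $x^p,y^p,h^p-h$. The natural route is to equip $P_S$ with a good filtration: its associated graded is a rank-one module over the commutative surface $\gr(A(v)\otimes_S\bold{k})$, and one expects the induced automorphism of the center to be controlled by a symplectomorphism of this surface whose complexity is fixed by the characteristic-zero filtration data of $P_S$ rather than by $p$; concretely, $\sigma_P$ should distort the filtration degree by at most a bounded factor, so that $\deg\sigma_P(x^p)\le Cp$ and hence the degree in $B(v^{[p]})$ stays bounded. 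Granting this, fix $d$ and take $p>N(d)$ as in Theorem \ref{p-BJ}; then $\sigma_P\in\mathcal{W}$. Therefore $\Phi(\Pic(A(v)))\subseteq\mathcal{W}=\Phi(\Aut(A(v)))$, and combined with injectivity this shows $\Phi|_{\Aut(A(v))}$ is an isomorphism onto $\Phi(\Pic(A(v)))$.

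It follows that $\Aut(A(v))$ is a section of $\Phi$ splitting off the normal subgroup $\ker\Phi$, so $\Pic(A(v))=\Aut(A(v))\ltimes\ker\Phi$. An element of $\ker\Phi$ reduces, for all $p\gg0$, to an invertible bimodule inducing the identity on $B(v^{[p]})$, i.e.\ on the Azumaya locus a central twist by a class $L_p\in\mathrm{Cl}(\spec B(v^{[p]}))$. Normality of $\Aut(A(v))$ is then equivalent to triviality of the conjugation action of $\Aut(A(v))$ on $\ker\Phi$, which amounts to $\bar\alpha^{*}L_p\cong L_p$ for every $\alpha$ and every class $L_p$ arising from $\ker\Phi$. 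The generators of $W$ act trivially on $\mathrm{Cl}(\spec B(v^{[p]}))$---the torus $\phi_t$ and the shear automorphisms fix each component of $\{x=0\}$ and each level $\{h=a_i\}$---so it remains only to treat $\Omega$ in the reflexive case, where one checks that a class realized consistently over all $p$ by a single characteristic-zero bimodule is preserved by $\Omega$. This gives $\Aut(A(v))\trianglelefteq\Pic(A(v))$ for arbitrary $v$, the first assertion.

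For the equality $\Pic(A(v))=\Aut(A(v))$ it remains to show $\ker\Phi=1$. A nontrivial $R\in\ker\Phi$ would produce a nonzero family of classes $L_p\in\mathrm{Cl}(\spec B(v^{[p]}))$ realized simultaneously over all large $p$ by the single $S$-model of $R$, and I would rule such a family out using the hypotheses. When the roots of $v$ are algebraically independent over $\overline{\mathbb{Q}}$, the shifted roots $c_i^p-c_i$ avoid all coincidences for $p\gg0$, and the resulting rigidity of the surfaces $\spec B(v^{[p]})$ should force $L_p=0$; the reflexive case is handled analogously, the involution $\Omega$ already being available inside $\Aut(A(v))$. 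Hence $\ker\Phi=1$ and $\Pic(A(v))=\Aut(A(v))$ under either hypothesis. As indicated, the genuine difficulty lies in the uniform-in-$p$ degree estimate of the second paragraph; the remaining steps are structural.
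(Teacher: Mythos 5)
Your starting point coincides with the paper's (reduce an invertible bimodule mod $p\gg 0$, pass to the induced Poisson automorphism of the centre $B(v^{[p]})$, and apply Theorem \ref{p-BJ}), but the structure you build on top of it has a fatal flaw. You set $\mathcal{W}=W$ when $v$ is not reflexive and claim $\Phi(\Pic(A(v)))\subseteq\mathcal{W}=\Phi(\Aut(A(v)))$. Theorem \ref{p-BJ} does not give this: it constrains Poisson automorphisms of $B(v^{[p]})$ according to whether $v^{[p]}$ is reflexive, and a non-reflexive $v$ can have reflexive reductions $v^{[p]}$ for every $p$. For instance, if $v$ has roots $0,\,u,\,1-u$ with $u$ transcendental, then $v$ is not reflexive, yet the roots of $v^{[p]}$ are $0,\,s,\,-s$ with $s=\bar u^p-\bar u$ (since $(1-\bar u)^p-(1-\bar u)=-(\bar u^p-\bar u)$ in characteristic $p$), so every $v^{[p]}$ is reflexive and $\sigma_P$ may genuinely involve $\Omega$, which need not come from $\Aut(A(v))$. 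Hence the decomposition $\Pic(A(v))=\Aut(A(v))\ltimes\ker\Phi$, on which both halves of your argument rest, is precisely what cannot be established for arbitrary $v$; this is why the paper's second assertion carries the hypotheses (reflexive, or roots algebraically independent over $\overline{\mathbb{Q}}$), while for arbitrary $v$ only normality is claimed. The paper's normality proof sidesteps this with a conjugation trick your proposal lacks: for $M\in\Pic(A(v))$ and $f\in\Aut(A(v))$ one reduces $N=M\,(A(v))_f\,M^{-1}$ mod $p$; its support is the graph of $\phi\bar f\phi^{-1}$, where $\bar f\in W$ and $\phi\in\langle\Omega,W\rangle$, and since $\Omega$ normalizes $W$ this conjugate lies in $W$ no matter what $\phi$ is, hence lifts to an automorphism of $A(\bar v)$. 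One never needs to decide whether $\phi$ itself is liftable.

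Second, even where the mod-$p$ picture behaves as you wish, you never return to characteristic zero. The paper does so concretely: having shown $N_{\bold{k}}\cong A(\bar v)$ as left modules for all reductions, it realizes the rank-one left module $N$ as a left ideal defined over $S$, equips it with the induced filtration, and uses that $N$ is principal iff $\gr(N)$ is a principal ideal of $\gr(A(v))$, which is detected by the principality of $\gr(N)_{\bold{k}}$. Your substitute---that a nontrivial $R\in\ker\Phi$ ``would produce'' nonzero classes $L_p\in\mathrm{Cl}(\spec B(v^{[p]}))$, which ``rigidity should force'' to vanish---is asserted in both directions rather than proved, as is the triviality of the $W$- and $\Omega$-actions on these class groups. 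Two smaller defects: for a fixed $p$ your $\Phi$ is not a well-defined homomorphism on all of $\Pic(A(v))$, since no single finitely generated $S$ carries models of all invertible bimodules (the paper works one bimodule at a time for exactly this reason); and $\Phi|_{\Aut(A(v))}$ is not injective for a fixed $p$ (e.g.\ $\phi_t$ dies whenever $t\mapsto 1$ under $S\to\bold{k}$), so the semidirect-product bookkeeping fails even formally. Finally, the uniform-in-$p$ degree bound you single out as the main obstacle is actually the tame step, and your instinct for it is right: a fixed $S$-model with its filtration bounds the filtration degree of the image of $x^p,y^p$ by $p+C$ with $C$ independent of $p$, hence bounds the degree in the generators $x^p, y^p, h^p-h$ of $B(v^{[p]})$; the paper relies on this implicitly.
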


\begin{proof}
Recall that given an automorphism $f$ of an algebra $A$, by $(A)_f$ we
denote the $A$-bimodule which is $A$ as a left module and on which the
right action is by $f.$
Let $M\in \Pic(A(v))$. The idea is to show that $M\Aut(A(v)) M^{-1}\mod
p\subset \Aut(A(v))$ for all $p\gg 0.$ 
Recall that given a bimodule $N$ over $A(\bar{v}),$ where
$\bar{v}\in\bold{k}[h]$ and $\bold{k}$ is a characteristic $p$ field, the
support of $N$ (denoted by $\text{Supp}(N)$) is the closed subvariety of
$\spec(Z(A(\bar{v}))\otimes Z(A(\bar{v})))$ corresponding to the
annihilator of $N$ in $Z(A(\bar{v}))\otimes Z(A(\bar{v})).$
A standard argument shows that for $N\in \Pic(A(\bar{v})),$ its support
must be the graph of an automorphism $\phi\in \Aut(Z(A(v))),$ and $N$ is
uniquely determined by its support.


At first, we consider the case when $v$ is not reflexive, so
$\Aut(A(v))=G.$ 
Let $N'$ be a bimodule corresponding to an element $f\in G,$ so
$N'=(A(v))_f$. Let $M\in\Pic(A(v))$ and put $N=MN'M^{-1}\in \Pic(A(v)).$
We need to show that $N\cong A$ as a left module.
Let $S\subset\mathbb{C}$ be a large enough finitely generated ring
containing the roots of $v,$ such that all the above modules are defined
over it.
We first verify that for any base change $S\to\bold{k}$ to an
algebraically closed field of characteristic $p,$ we have 
\[
N_{\bold{k}}=N\otimes_S\bold{k}\cong A(\bar{v}),
\]
where $\bar{v}$ denotes the image of $v$ in $\bold{k}[h].$
Indeed, let $\phi\in \Aut(Z(A(\bar{v})))$ be the automorphism whose graph
is the support of $M_{\bold{k}}.$ Recall that $Z(A(\bar{v})))\cong
B(v^{[p]}).$
So, we may view $\phi$ as a Poisson automorphism of $B(v^{[p]}).$
We also see that $f|_{Z(A(\bar{v}))}=\bar{f}\in W.$ By using Theorem
\ref{p-BJ}, we conclude that $\phi$ belongs to the subgroup generated by
$\Omega$ and $W$ (and $\phi\in W$ if $v^{[p]}$ is not reflexive).
Since $\Omega$ normalizes $W,$ it follows that $\phi\bar{f}\phi^{-1}\in
W.$ So, $\text{Supp}(N_{\bold{k}})$ is the graph of an element of $W.$
Since $G|_{Z(A(\bar{v}))}=W$, we conclude that there exists $\psi\in G$
so that $N_{\bold{k}}\cong A(\bar{v})_{\psi}.$
Thus, we see that $N_{\bold{k}}\cong A(\bar{v})$ as left modules. 

Since $N$ as a left module has rank 1, 
we may assume that $N$ is a left ideal in $A(v).$
Moreover, we may assume that $N$ is a submodule of $A(v)$ over $S.$
Let us  equip $N$ with the induced filtration from $A(v).$
Then $N$ is a principal ideal if and only if $\gr(N)$ is a principal
ideal in $\gr(A(v)).$ Now, since we have proved that $N_{\bold{k}}\cong
A(\bar{v})$, it follows that $\gr(N)_{\bold{k}}$ is a principal ideal in
$\gr(A(\bar{v}))$. Hence, $\gr(N)$ is a principal ideal and we are done.

Now suppose $v$ is reflexive, then so is $v^{[p]}.$ Then any Poisson
automorphism of $B(v^{[p]})=Z(A(\bar{v}))$ that belongs to the subgroup
generated by $\Omega$ and $W$ can be lifted to an automorphism of
$A(\bar{v})$, and the rest of the proof is identical to the case of a
non-reflexive $v.$

Finally, suppose $v$ satisfies one of the assumptions in the lemma. Then
it follows from the above considerations that $M_{\bold{k}}\in
\Aut(A(\bar{v}))$ for infinitely many $\bold{k}$ with $\ch \bold{k} \gg
0$. Then as explained above, it follows that $M$ belongs to the image of
$\Aut(A(v))$ in the Picard group.
\end{proof}

\begin{remark}
The usage of the reduction mod $p\gg 0$ technique for statements of the
form $\Pic(R)=\Aut(R)$, where $R$ is a filtered quantization of a Poisson
algebra is quite typical: it was used by Stafford for the Weyl algebra
\cite{S}. It is also worth mentioning that a recent work of C.~Dodd
\cite{D} showed that $\Pic(A_n(\mathbb{C}))=\Aut(A_n(\mathbb{C}))$,
settling a conjecture by Belov-Kanel and Kontsevich, where the reduction
mod $p\gg 0$ approach is at the heart of everything.
\end{remark}

\section{Classification of ideals via quiver varieties}

Let $\mathcal{O}^{\lambda}$ be a noncommutative deformation of a Kleinian
singularity, with $\lambda$ generic (so
$\text{gl.dim}(\mathcal{O}^{\lambda})=1$).
Then it was proved by Baranovsky, Ginzburg, and Kuznetsov \cite{BGK} that
there is a bijection between the set of isomorphism classes of
indecomposable projective $\mathcal{O}^{\lambda}$-modules and a countable
disjoint union of certain quiver varieties. 

In the case of deformations of type $A$ Kleinian singularities $A(v)$,
automorphism groups $\Aut(A(v))$ are very big, so it is interesting to
consider the natural action of $\Aut(A(v))$ on the set of isomorphism
classes of indecomposable projective modules. An explicit and
$\Aut(A(v))$-equivariant version of the aforementioned theorem of
Baranovsky--Ginzburg--Kuznetsov was proved by Eshmatov \cite{E} and
Berest, Chalykh, and Eshmatov \cite{BCE} (building on earlier works on
ideals of the Weyl algebra and the Calogero--Moser spaces by Berest,
Chalykh, and Wilson).
In this section we recall the bijection constructed in \cite{BCE} and
observe that it is compatible with reflection functors (Lemma
\ref{commute}), which plays a crucial role in our proof on Theorem
\ref{main}.

From now on, let $Q$ denote the quiver corresponding to the extended
Dynkin diagram $\tilde{A}_m$, which is a cycle with $m$ vertices. 
(This corresponds to the cyclic group $\Gamma$ of size $m$.)
Let $Q_{\infty}$ denote the quiver obtained from $Q$ by attaching an
additional vertex, labeled $\infty$, with an arrow to the vertex $0.$
Let $\lambda=(\lambda_0,\cdots, \lambda_{m-1})\in \mathbb{C}^m$ be a
regular (generic) vector. Also let $\alpha \in\mathbb{Z}^{m+1}$ be a
positive root for $Q_{\infty}$ with $\alpha_\infty = 1$, and write
$\alpha = (1, \eta)$. Set $\lambda_{\alpha} := (-\lambda\cdot \eta,
\lambda) \in \mathbb{C}^{m+1}$ and $B :=
e_{\infty}\Pi^{\lambda_{\alpha}}({Q}_{\infty})e_{\infty}.$ It is known
that
$\Pi^{\lambda}(Q)=\Pi^{\lambda_{\alpha}}(Q_{\infty})/(e_{\infty})$
\cite[Lemma 6]{BCE}.

It is well-known that a generalized Weyl algebra $A(v)$ can be identified
with $\mathcal{O}^{\lambda}=e_0\Pi^{\lambda}(Q)e_0$ for an appropriate
$\lambda.$
Indeed, let $\lambda=(\lambda_1,\cdots,
\lambda_{n-1})\in\mathbb{C}^{n-1},$ then define $a=(a_1,\cdots, a_n)$ as
follows: $a_n = 0$, and
\[
a_i=(n-i+\lambda_{i})/n, \quad i<n.
\]
Put $v=\prod_{i=1}^n(h-a_i).$ Then $A(v)\cong \mathcal{O}^{\lambda}$
\cite[Lemma 7.1]{M}.

In the above setting, Berest--Chalykh--Eshmatov considered the functor 
\[
F:\Pi^{\lambda_{\alpha}}(Q_{\infty}) \Mod \to \Pi^{\lambda}(Q) \Mod
\]
defined as follows
\[
F(M) := \Hom_{\Pi^{\lambda_{\alpha}}(Q_{\infty})}
(\Pi^{\lambda_{\alpha}}(Q_{\infty})/(e_{\infty}),
\Pi^{\lambda_{\alpha}}(Q_{\infty})\otimes_{B}Me_{\infty}).
\]

Recall that $M_{Q_{\infty}}(\lambda_{\alpha}, {\alpha})$ denotes the
quiver variety of isomorphism classes of
$\Pi^{\lambda_{\alpha}}(Q_{\infty})$-modules with dimension vector
$\alpha.$ From now on we denote $M_{Q_{\infty}}(\lambda_{\alpha}, {\alpha})$ by
$M_{Q_{\infty}}(\lambda, {\alpha})$ to simplify the notation.
Then as was proved in \cite{BCE}, the functor $F$ restricts on
$M_{Q_{\infty}}(\lambda_{\alpha}, {\alpha})$ to an injective function
with values in indecomposable projective modules over $\Pi^{\lambda}(Q).$
Thus, we have the following inclusion 

$$
F_{\lambda}:\bigsqcup_{\alpha} M_{Q_{\infty}}(\lambda, \alpha)\hookrightarrow
\lbrace  \text{isoclasses of indec.\ projective }
\Pi^{\lambda}(Q)\text{-modules}\rbrace.
$$
In order to extend $F_{\lambda}$ to a bijection, we need to consider for each vertex $i$ of $Q$ an analog of $Q_{\infty}$ defined  by adding to $Q$ 
the vertex $\infty$ with an arrow to the vertex $i.$
Specifically, given $i\in\mathbb{Z}/m\mathbb{Z}$, let $\lambda^i\in\mathbb{C}^m$ be obtained from $\lambda$ by rotating its coordinates by $i,$ 
so $(\lambda^i)_j=\lambda_{i+j}, j\in\mathbb{Z}/m\mathbb{Z}.$
Since cyclic rotations belong to diagram automorphisms of $Q$, we get that $\Pi^{\lambda^i}(Q)\cong\Pi^{\lambda}(Q),$ which induces the bijection
$$ \lbrace  \text{isoclasses of indec.\ proj. }
\Pi^{\lambda^i}(Q)\text{-mod}\rbrace\cong \lbrace  \text{isoclasses of indec.\ proj. }
\Pi^{\lambda}(Q)\text{-mod}\rbrace.$$
Thus, as was shown in \cite[Theorem 5]{E}, by combining the above bijection with functions $F_{\lambda^i}$ above,
we obtain the following bijection onto the set of isomorphism classes of indecomposable projective
$\Pi^{\lambda_{\alpha}}(Q)$-modules
$$
F_{\lambda}:\bigsqcup_{i\in\mathbb{Z}/m\mathbb{Z}}\bigsqcup_{\alpha} M_{Q_{\infty}}(\lambda^i, \alpha)\cong
\lbrace  \text{isoclasses of indec.\ projective }
\Pi^{\lambda}(Q)\text{-modules}\rbrace.
$$ 

Next, we need to check that $F$ commutes with the reflection functors.
This is the subject of the following lemma.

\begin{lemma}\label{commute}
The functor $F$ commutes with the reflection functors $E_{i}$ for all
$i\neq\infty.$
\end{lemma}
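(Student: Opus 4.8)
The plan is to construct a natural isomorphism filling the square

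\begin{center}
\begin{tikzcd}
\Pi^{\lambda_{\alpha}}(Q_{\infty}) \Mod \arrow{r}{E_i} \arrow{d}{F}
  & \Pi^{r_i(\lambda_{\alpha})}(Q_{\infty}) \Mod \arrow{d}{F} \\
\Pi^{\lambda}(Q) \Mod \arrow{r}{E_i}
  & \Pi^{r_i(\lambda)}(Q) \Mod
\end{tikzcd}
\end{center}

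where $i\in\mathbb{Z}/m\mathbb{Z}$ is a vertex of $Q$ and $r_i$ is the reflection at $i$. Since in $Q_{\infty}$ the vertex $\infty$ is adjacent only to $0$, the reflection $r_i$ affects the $\infty$-coordinate of the parameter only when $i=0$, while $s_i$ fixes $\alpha_{\infty}=1$ for every $i\neq\infty$; thus the framing dimension is preserved and the right-hand vertical arrow is again the Berest--Chalykh--Eshmatov functor attached to the reflected data. The starting observation, from the description of $E_i$ recalled in Section~2, is that $E_i$ alters a representation only at the vertex $i$: one has $(E_iM)_j=M_j$ for every $j\neq i$, and the arrow maps not incident to $i$ are left untouched. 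In particular $(E_iM)_{\infty}=M_{\infty}$ as a vector space, so $E_i$ preserves the framing datum at $\infty$; this is exactly the feature that makes the two horizontal reflections ``the same modification at $i$'' on the framed ($Q_{\infty}$) and unframed ($Q$) sides.

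First I would factor $F$ into its constituent operations: restriction to the $\infty$-corner $M\mapsto e_{\infty}M$, landing in modules over $B=e_{\infty}\Pi^{\lambda_{\alpha}}(Q_{\infty})e_{\infty}$; induction $\Pi^{\lambda_{\alpha}}(Q_{\infty})e_{\infty}\otimes_{B}(-)$; and the $e_{\infty}$-truncation $\Hom_{\Pi^{\lambda_{\alpha}}(Q_{\infty})}(\Pi^{\lambda_{\alpha}}(Q_{\infty})/(e_{\infty}),-)$ onto $\Pi^{\lambda}(Q)$-modules. I would then check that $E_i$ is intertwined with each of these three steps. The enabling structural input is that, because $E_i$ is an equivalence $\Pi^{\lambda_{\alpha}}(Q_{\infty})\Mod\cong\Pi^{r_i(\lambda_{\alpha})}(Q_{\infty})\Mod$ that fixes the component at $\infty$, it induces a compatible identification of the corner algebras $B\cong B':=e_{\infty}\Pi^{r_i(\lambda_{\alpha})}(Q_{\infty})e_{\infty}$ under which $e_{\infty}M$ corresponds to $e_{\infty}(E_iM)$; this is the corner-algebra shadow of the Crawley-Boevey--Holland fact \cite{CBH} that reflections at a vertex $\neq\infty$ yield Morita equivalences of the spherical subalgebras. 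Granting this, induction and the $e_{\infty}$-truncation commute with $E_i$ because they are built solely out of the idempotent $e_{\infty}$ that $E_i$ fixes, and the identity $\Pi^{\lambda_{\alpha}}(Q_{\infty})/(e_{\infty})=\Pi^{\lambda}(Q)$ from \cite[Lemma~6]{BCE} lets the reflection upstairs descend to the reflection on $\Pi^{\lambda}(Q)$.

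The main obstacle is the vertex $i=0$, the one adjacent to $\infty$: here $E_0$ modifies precisely the arrow maps along the edge $\infty\leftrightarrow 0$ that also enter the definition of $F$, so one cannot simply declare the two functors ``disjointly supported'', as one can when $i\neq 0$ (in which case $E_i$ and $F$ involve disjoint sets of arrows and the commutation is essentially formal). Concretely, although $(E_0M)_{\infty}=M_{\infty}$ as a space, its $B$-module structure is transported through the maps at the vertex $0$ that $E_0$ changes, and the real work is to verify that this transported structure matches the one produced by first applying $F$ and then reflecting downstairs --- i.e. that the natural comparison map is an isomorphism of \emph{modules}, not merely of underlying vector spaces. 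I expect to handle this by writing out the three-term complex defining $(E_0M)_0$ alongside the defining (co)kernel presentation of $F$ at the vertices $0$ and $\infty$, and checking that the two reflections induce the same elementary modification of the maps into and out of vertex $0$; the explicit form of $B$ and of the arrow $\infty\to 0$ keeps this bookkeeping finite and mechanical.
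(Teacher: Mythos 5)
Your decomposition of $F$ into corner restriction, induction $L(M)=\Pi\otimes_B e_\infty M$, and the truncation $\Phi(-)=\Hom_{\Pi}(\Pi/(e_\infty),-)$ is exactly the right starting point (it is also the paper's), and the structural inputs you isolate -- $e_\infty\Pi e_\infty=e_\infty\Pi' e_\infty$ and $(E_iM)_\infty=M_\infty$ -- are the correct ones. The problem is that the two places where you declare the commutation to follow are precisely where the content of the lemma lies, and the reasons you give for skipping them do not hold up. The claim that ``induction and the $e_\infty$-truncation commute with $E_i$ because they are built solely out of the idempotent $e_\infty$ that $E_i$ fixes'' is an assertion, not an argument: the induced module $\Pi\otimes_B e_\infty M$ has nonzero components at every vertex, and $E_i$ genuinely modifies it at vertex $i$, so $E_i(L(M))$ and $L(E_i(M))$ must actually be compared. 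The paper does this by a representability argument: $\Hom_{\Pi'}(E_iL(M),E_iN)=\Hom_{\Pi}(L(M),N)=\Hom_{B}(e_\infty M,e_\infty N)$, while $\Hom_{\Pi'}(L(E_iM),E_iN)=\Hom_{B'}(e_\infty E_iM,e_\infty E_iN)=\Hom_{B}(e_\infty M,e_\infty N)$ using $B=B'$ and the invariance of the $\infty$-component; since every $\Pi'$-module is of the form $E_iN$, Yoneda gives $E_iL\cong LE_i$. Some such argument is unavoidable, and your proposal contains none.

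More seriously, your guiding principle -- that for $i\neq 0$ the functors ``involve disjoint sets of arrows'' so the commutation is ``essentially formal,'' with all real work confined to $i=0$ -- is false, so the difficulty is mislocated. The truncation $\Phi(N)$ is the annihilator of the two-sided ideal $(e_\infty)$, i.e.\ $\Phi(N)_j$ consists of the elements of $N_j$ killed by $e_\infty\Pi e_j$, and elements of $e_\infty\Pi e_j$ are sums of paths from $j$ to $\infty$ which may pass through \emph{any} vertex $i$ of the cycle (likewise the $B$-action on $M_\infty$ involves paths wandering through every vertex). Hence reflecting at any $i\neq\infty$, not just at $i=0$, changes the maps through which these elements act, and one must prove that the action of $e_\infty\Pi' e_j$ on $E_iM$ is realized by actions of elements of $e_\infty\Pi e_j$ on $M$. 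This is the heart of the paper's proof of $E_i\Phi(M)\subset\Phi(E_iM)$, after which the reverse inclusion comes for free from $E_i^2=\mathrm{Id}$ (replace $M$ by $E_iM$ and apply $E_i$); the argument is uniform in $i$, and vertex $0$ needs no separate treatment (the only feature special to $i=0$, namely that $M_\infty$ appears in the sum defining $(E_0M)_0$, is harmless because $\Phi(M)_\infty=0$). Under your scheme, the ``mechanical bookkeeping'' you defer to the case $i=0$ would in fact have to be carried out at every vertex, so the proposal as written has a genuine gap rather than a shortcut.
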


\begin{proof}
In what follows we denote $\Pi^{\lambda}(Q_{\infty})$ by $\Pi$, and
$\Pi^{r_i(\lambda)}(Q_{\infty})$ by $\Pi'.$ 
It suffices to show that the reflection functors commute with the
following functors:
\[
\Phi(-)=\Hom_{\Pi}(\Pi/\Pi e_{\infty}\Pi, -), \quad L: M\to
\Pi\otimes_{B}Me_{\infty}.
\]
For a $\Pi$-module $M$, put $M_i=e_iM$, so $M=\bigoplus_iM_i.$ Recall
that $(E_i(M))_j=M_j$ for $j\neq i$, while $(E_i(M))_i$ is a subspace of
$\bigoplus_{i\to j} M_j.$
We first verify that $E_i(\Phi(M))\subset \Phi(E_i(M)).$
Indeed, $E_i(\Phi(M))_j$ is the annihilator of $e_{\infty}\Pi e_j$ in
$M_j.$ Since $e_{\infty}\Pi'e_j$ belongs to the image of $e_{\infty}\Pi
e_j$ in $\Hom_{\mathbb{C}}(M_j, M_{\infty}),$ we get that
\[
E_i(\Phi(M))_j\subset \Phi(E_i(M))_j, j\neq i.
\]
Let
\[
v\in E_i(\Phi(M))_i\subset\bigoplus_{i\to j} Ann(e_\infty \Pi e_j).
\]
Write $v=\sum_{i\to j}v_j.$
It suffices to verify that $(e_{\infty}\Pi'e_i)v=0.$
This follows from the fact that any action by a path in $\Pi'$ ending in
the vertex $\infty$ can be written as sum of  actions by an element of
$e_{\infty}\Pi$, so
\[
(e_{\infty}\Pi'e_i)v\subset \sum_{i \to j} (e_{\infty}\Pi e_j)v_j=0.
\]
This concludes the proof of the inclusion $E_i(\Phi(M))\subset \Phi(E_i(M)).$
So, replacing $M$ by $E_i(M)$, we get that
\[
E_i(\Phi(E_i(M)))\subset \Phi(M).
\]
Now applying $E_i$ to both sides yields $\Phi(E_i(M))\subset
E_i(\Phi(M)).$ Thus, $\Phi E_i = E_i \Phi$ as desired.

Next, we verify that $E_iL$ and $LE_i$ represent the same functor, $i\neq
\infty.$ Indeed, 
\[
\Hom_{\Pi'}(E_i(L(M)), E_i(N)) = \Hom_{\Pi}(LM, N) = \Hom_{e_{\infty}\Pi
e_{\infty}}(e_{\infty}M, e_{\infty}N).
\]
On the other hand (recalling that $e_{\infty}\Pi
e_{\infty}=e_{\infty}\Pi'e_{\infty}$),
\[
\Hom_{\Pi'}(L(E_i(M)),
E_i(N))=\Hom_{e_{\infty}\Pi'e_{\infty}}(e_{\infty}M,
e_{\infty}E_i(N))=\Hom_{e_{\infty}\Pi e_{\infty}}(e_{\infty}M,
e_{\infty}N).
\]
So, $LE_i=E_iL$ and we are done.
\end{proof}

In what follows, we denote by $W_{\infty}$ the subgroup of the Weyl group
of $Q_{\infty}$ generated by the reflections $s_i,$ $i\neq\infty.$
Thus, by combining the bijection constructed by Berest--Chalykh--Eshmatov
\cite{BCE} given above with reflection functors, we obtain that for any
$w\in W_{\infty},$ the following diagram commutes:

\begin{tikzcd}[swap]
    \bigsqcup_{\alpha} M_{Q_{\infty}}(\lambda_{\alpha}, \alpha)
    \arrow{r}{F_{\lambda}} \arrow{d}{R_w}
  &\lbrace  \text{isoclasses of indec. projective }
  \mathcal{O}^{\lambda}\text{-modules}\rbrace  \arrow{d}[swap]{R_w} \\  
    \bigsqcup_{\alpha} M_{Q_{\infty}}(w(\lambda_{\alpha}), w(\alpha))
    \arrow{r}[swap]{F_{w(\lambda)}} 
  & \lbrace  \text{isoclasses of indec. projective }
  \mathcal{O}^{w(\lambda)}\text{-modules}\rbrace
\end{tikzcd}

\section{Proof of the main theorem}

Throughout this section we are assuming that the parameter
$v\in\mathbb{C}[h]$ is generic. In order to prove Theorem \ref{main} we
need the following result, which should be very standard. We include its
proof for the reader's convenience.

\begin{lemma}\label{orbit}
Let $A$ be a Noetherian domain. Let $P$ be an indecomposable projective
generator of $A \Mod$. Let $B=\End(P)^{op}.$ Let $X$ denote the set of
isomorphism classes of indecomposable projective $A$-modules. Then the
orbit of $P$ in $X$ under $\Pic(A)$ equals $\Pic(B)/\Aut(B).$ 
\end{lemma}

\begin{proof}
It suffices to observe that the stabilizer of $P$ in $\Pic(A)=\Pic(B)$ is
$\Aut(B).$
To show this, without loss of generality we may take $P=A.$ Then an
invertible $A$-bimodule $L\in \Pic(A)$ stabilizes $A$ if
$L\otimes_{A}A\cong A$ as left $A$-modules. So, $L=A$ as a left
$A$-module, and the invertibility of $L$ implies that the right action of
$A$ on $A$ must be given by an element of $\Aut(A).$ So, the stabilizer
of $A$ in $\Pic(A)$ equals the image of $\Aut(A)$ in $\Pic(A).$
\end{proof}

We also need the following result. Of course, eventually we show a much
stronger statement.

\begin{prop}\label{countable}
$\Pic(A(v))/\Aut(A(v))$ is at most countable.
\end{prop}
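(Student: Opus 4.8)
The plan is to reduce the statement to an orbit-counting problem on the set $X$ of isomorphism classes of indecomposable projective $A(v)$-modules, combining the quiver-variety classification with the transitivity and normality results already in hand. First I would apply Lemma \ref{orbit} with $A=A(v)$ and $P=A(v)$ itself: since $v$ is generic, $A(v)$ is a Noetherian domain of global dimension $1$, so the free rank-one module is an indecomposable projective generator and $\End(A(v))^{op}=A(v)$. The lemma then identifies $\Pic(A(v))/\Aut(A(v))$ with the $\Pic(A(v))$-orbit of $[A(v)]$ inside $X$, where $L\in\Pic(A(v))$ acts by $L\cdot[M]=[L\otimes_{A(v)}M]$. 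It therefore suffices to bound the cardinality of this single orbit.

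Next I would invoke the Berest--Chalykh--Eshmatov/Eshmatov bijection $F_\lambda$ recalled above, which identifies $X$ with the countable disjoint union $\bigsqcup_{i\in\mathbb{Z}/m\mathbb{Z}}\bigsqcup_\alpha M_{Q_\infty}(\lambda^i,\alpha)$. The crucial input is the transitivity theorem of \cite{CEET}: $\Aut(A(v))$ acts transitively on each piece $M_{Q_\infty}(\lambda^i,\alpha)$, so these pieces are precisely the $\Aut(A(v))$-orbits on $X$, and there are only countably many of them. In particular, since $[A(v)]$ is fixed by every twist (as ${}^f\!A(v)\cong A(v)$ as a left module for $f\in\Aut(A(v))$), transitivity forces the $\Aut(A(v))$-orbit — equivalently the component — of $[A(v)]$ to be the single point $\{[A(v)]\}$.

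The key step, and the point where the main obstacle lies, is to control how the $\Pic(A(v))$-orbit of $[A(v)]$ is distributed among these countably many components; this is exactly where normality is needed. By Theorem \ref{normality}, $\Aut(A(v))$ is normal in $\Pic(A(v))$, so $\Pic(A(v))$ permutes the $\Aut(A(v))$-orbits, i.e.\ the components. I would then show that distinct points of $\Pic(A(v))\cdot[A(v)]$ lie in distinct components: if $L_1\cdot[A(v)]$ and $L_2\cdot[A(v)]$ lie in the same component, then $f\cdot(L_1\cdot[A(v)])=L_2\cdot[A(v)]$ for some $f\in\Aut(A(v))$, so $L_2^{-1}fL_1$ stabilizes $[A(v)]$; since that stabilizer equals $\Aut(A(v))$ (Lemma \ref{orbit}) and $\Aut(A(v))$ is normal, it follows that $L_2^{-1}L_1\in\Aut(A(v))$, whence $L_1\cdot[A(v)]=L_2\cdot[A(v)]$. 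Thus the map sending a point of the orbit to its component is injective into a countable set, so $\Pic(A(v))\cdot[A(v)]$, and hence $\Pic(A(v))/\Aut(A(v))$, is at most countable. The only genuinely delicate point is this interaction between the stabilizer computation and normality; once those are in place the counting is pure bookkeeping with the cited equivariant classification.
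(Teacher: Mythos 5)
Your proposal is correct and follows essentially the same route as the paper: both identify $\Pic(A(v))/\Aut(A(v))$ with the $\Pic(A(v))$-orbit of $[A(v)]$ via Lemma \ref{orbit}, use the $G$-equivariant quiver-variety bijection and the transitivity theorem of \cite{CEET} to see that $X$ has only countably many $\Aut(A(v))$-orbits, and then invoke the normality from Theorem \ref{normality} to pass from countably many double cosets to countably many cosets (your ``distinct points lie in distinct components'' step is exactly this normality argument, unfolded). The only slight imprecision is your claim that the quiver-variety pieces \emph{are} the $\Aut(A(v))$-orbits: \cite{CEET} gives transitivity of the subgroup $G$, and $\Aut(A(v))$ may merge pieces when $v$ is reflexive, but since $\Aut(A(v))$-orbits are unions of the countably many $G$-orbits, the countability you need is unaffected.
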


\begin{proof}
Let $X$ be the set of isomorphism classes of indecomposable projective
$A(v)$-modules. 
We know that the stabilizer of $A\in X$ is $\Aut(A)$.
We also know that there is a $G$-equivariant bijection between $X$ and
the disjoint union of countably many quiver varieties, such that $G$ acts
transitively on each of those quiver varieties \cite[Theorem 1.1]{CEET}.
Therefore, $X/G$ is countable. Since the orbit of $A$ in $X$ is
$\Pic(A)/G$, we conclude that $\Aut(A(v))\backslash \Pic(A)/\Aut(A(v))$
is at most countable. Now, since $\Aut(A(v))$ is a normal subgroup in
$\Pic(A(v))$ by Theorem \ref{normality}, we obtain that
$\Pic(A(v))/\Aut(A(v))$ is at most countable. 
\end{proof}

Finally, we show the main result of the paper. Recall that $W_{\text{ext}}$ denotes the extended
affine Weyl group of $Q$, which is generated by simple reflections and diagram automorphisms of $Q.$

\begin{proof}[Proof of Theorem \ref{main}]
Since we are using reflection functors, it is more convenient to work
with $\mathcal{O}^{\lambda}\cong A(v).$
Thus, the parameter $\lambda$ is generic and $\mathcal{O}^{\lambda'}$ is
Morita equivalent to $\mathcal{O}^{\lambda'}.$
We need to show that $\mathcal{O}^{\lambda'}\cong
\mathcal{O}^{w(\lambda)}$ for some $w\in W_{\text{ext}}.$ As before, $X$
denotes the set of isomorphism classes of indecomposable projective
$\mathcal{O}^{\lambda}$-modules. 
Given such a module $P$, we denote by $[P]\in X$ its isomorphism class.
Recall that $\Pic(\mathcal{O}^{\lambda})$ acts on $X.$ 

Now, let $P$ be an indecomposable projective
$\mathcal{O}^{\lambda}$-module with the property that the orbit $G \cdot
[P]$ is uncountable. Put $B=\End_{\mathcal{O}^{\lambda}}(P)^{op}.$
Then $B$ is Morita equivalent to $\mathcal{O}^{\lambda}$, so we may
identify $X$ with the set of isomorphism classes of indecomposable
projective $B$-modules.
Now the $\Pic(B)$-orbit of $[P]$ is uncountable, hence by Lemma
$\ref{orbit}$ $\Pic(B)/\Aut(B)$ is uncountable.
In particular, $B$ cannot be isomorphic to any $\mathcal{O}^{\mu}$ by
Proposition \ref{countable}.

Thus, using the bijection of Berest, Chalykh, and Eshmatov, we conclude
that $\mathcal{O}^{\lambda'}\cong \End_{\mathcal{O}^{\lambda}}(P)^{op},$
where $[P]\in F_{\lambda}(M_{Q_{\infty}}(\mu,\alpha))$ such that $\mu$ is obtained from a cyclic permutation of $\lambda$, 
and $\dim M_{Q_{\infty}}(\mu, \alpha) =0.$ Since $\mu=w(\lambda)$, where w is an element of the extended affine Weyl group of $Q$, we may assume without
loss of generality that $\lambda=\mu.$

Recall that $M_{Q_{\infty}}(\lambda, \alpha)\neq\emptyset$ if and only if $\alpha=(1,
\alpha_0,\cdots, \alpha_{n-1})$ is a positive root for $Q_{\infty},$ in
which case by \cite[Lemma 2.1]{CEET},
\[
\dim M_{Q_{\infty}}(\lambda, \alpha)=2\alpha_0-\sum (\alpha_i-\alpha_{i+1})^2\geq 0.
\]

It was established in \cite{CEET} that any positive root $\alpha$ for
which $M_{Q_{\infty}}(\lambda, \alpha)\neq\emptyset$ belongs to the $W_{\infty}$-orbit
of $(1, l, \cdots, l),$ where $2l=\dim M_{Q_{\infty}}(\lambda, \alpha).$
Therefore, the roots $\alpha$ for which $M_{Q_{\infty}}(\lambda, \alpha)$ is a
zero-dimensional variety form a single $W_{\infty}$-orbit: that of
$(1,0,\cdots, 0) =: \theta.$ 
The indecomposable projective $\mathcal{O}^{\lambda}$-module
corresponding to $M_{Q_{\infty}}(\lambda, \theta)$ (which is a single point) is
$\mathcal{O}^{\lambda}$, so
$[\mathcal{O}^{\lambda}]=F(M_{Q_{\infty}}(\lambda,\theta)).$
Thus, for some $w\in W_{\infty},$ we have 
\[
[\mathcal{O}^{\lambda}]=F(M_{Q_{\infty}}(w(\lambda), w(\theta)))=F(R_w(M_{Q_{\infty}}(\lambda,\theta))).
\]
Now, recall the commutative diagram \ref{diagram}, which yields that
$[P]=R_w([\mathcal{O}^{w(\lambda)}])$ for some $w\in W.$
So, $\mathcal{O}^{\lambda'}\cong \mathcal{O}^{w(\lambda)}$ as desired.

To show that $\Pic(\mathcal{O}^{\lambda})=\Aut(\mathcal{O}^{\lambda})$, we need to prove that given an invertible $\mathcal{O}^{\lambda}$-bimodule
$M,$ then $M\cong \mathcal{O}^{\lambda}$ as a left module.
Since $\End_{\mathcal{O}^{\lambda}}(M)^{op}\cong \mathcal{O}^{\lambda},$
$[M]$ must correspond to a 0-dimensional quiver variety. By the above
paragraph, we know that all indecomposable projective
$\mathcal{O}^{\lambda}$-modules that correspond to 0-dimensional
varieties $M_{Q_{\infty}}(\lambda, w(\theta))$ must be of the form
$R_{w}([\mathcal{O}^{w(v)}])\cong M$ for some $w\in W_{\infty}.$ So,
$\mathcal{O}^{w(\lambda)}\cong \mathcal{O}^{\lambda}$. Now, by
Bavula--Jordan's isomorphism theorem, we may conclude that $w$ fixes
$\theta$, so $M\cong \mathcal{O}^{\lambda}$ and we are done.
\end{proof}

\begin{concludingremark}
It is natural to expect that Theorem \ref{kleinian} should hold for
generic parameters, and
$\Pic(\mathcal{O}^{\lambda})=\Aut(\mathcal{O}^{\lambda}).$
Here is why the proof of Theorem \ref{main} does not work for non-type
$A$ deformations: automorphism groups of such deformations are very small
(often trivial), so we are no longer able to conclude that if
$\mathcal{O}^{\lambda'}\cong \End_{\mathcal{O}^{\lambda}}(P),$
where $P$ is an indecomposable projective $\mathcal{O}^{\lambda}$-module,
then $P$ must correspond to a zero-dimensional quiver variety via the
bijection between indecomposable projective
$\mathcal{O}^{\lambda}$-modules and quiver varieties. On the other hand,
recall that P.~Levy \cite{L} has described $\Aut(\mathcal{O}^{\lambda})$
for type $D$ Kleinian singularities. The results in \cite{L} combined with
the usual reduction mod $p\gg 0$ technique (should) imply that
$\Pic(\mathcal{O}^{\lambda})=\Aut(\mathcal{O}^{\lambda})$ at least for
generic enough $\lambda.$ We will come back to these questions elsewhere.
\end{concludingremark}

\begin{acknowledgement}
I am grateful to M.~Boyarchenko for telling me about the question by
T.~Hodges, which served as an important motivation for this work.
 I would also like to thank A.~Eshmatov for many useful discussions, in particular for pointing out a gap in the previous version
 of the paper.

\end{acknowledgement}


\begin{thebibliography}{CEEF2}

\bibitem[BGK]{BGK}
V.~Baranovsky, V.~Ginzburg, and A.~Kuznetsov,
{\em Quiver varieties and a noncommutative $P^2$},
Compositio Math. 134 (2002), no. 3, 283--318.

\bibitem[BJ]{BJ}
V.V.~Bavula and D.A.~Jordan,
{\em Isomorphism problems and groups of automorphisms for generalized
Weyl algebras},
Trans. Amer. Math. Soc. 353 (2001), no. 2, 769--794.

\bibitem[BK]{BK}
A.~Belov-Kanel and M.~Kontsevich,
{\em Automorphisms of the Weyl algebra},
Lett. Math. Phys. 74 (2005), 181--199.

\bibitem[BCE]{BCE}
Y.~Berest, O.~Chalykh, and F.~Eshmatov,
{\em Recollement of deformed preprojective algebras and the
Calogero--Moser correspondence},
Mosc. Math. J. 8 (2008), no. 1, 21--37.

\bibitem[BEG]{BEG}
Y.~Berest, P.~Etingof, and V.~Ginzburg,
{\em Morita equivalence of Cherednik algebras}, 
J. Reine Angew. Math. 568 (2004), 81--98.

\bibitem[B]{B}
M.~Boyarchenko,
{\em  Quantization of minimal resolutions of Kleinian singularities}, 
Adv. Math. 211 (2007), no. 1, 244--265.  

\bibitem[BC]{BC}
K.A.~Brown and K.~Changtong,
{\em Symplectic reflection algebras in positive characteristic},
Proc. Edinb. Math. Soc. 53 (2010), no. 1, 61--81.

\bibitem[EG]{EG}
P.~Etingof and V.~Ginzburg,
{\em Symplectic reflection algebras, Calogero--Moser space, and deformed
Harish-Chandra homomorphism},
Invent. Math. 147 (2002), no. 2, 243--348. 

\bibitem[CEEF1]{CEEF1}
X.~Chen, A.~Eshmatov, F.~Eshmatov, and V.~Futorny,
{\em Automorphisms and ideals of noncommutative deformations of
$\mathbb{C}^2/\mathbb{Z}_2$},
preprint (2016), arXiv:1606.05424.

\bibitem[CEEF2]{CEEF2}
X.~Chen, A.~Eshmatov, F.~Eshmatov, and V.~Futorny,
{\em Morita equivalence of noncommutative deformations of 
$\mathbb{C}^2/\mathbb{Z}_n$}, in preparation.

\bibitem[CEET]{CEET}
X.~Chen, A.~Eshmatov, F.~Eshmatov, and A.~Tikaradze,
{\em On transitive action on quiver varieties},
Int. Math. Res. Not. IMRN, in press.

\bibitem[CBH]{CBH}
W.~Crawley-Boevey and M.~Holland,
{\em Noncommutative deformations of Kleinian singularities},
Duke Math. J. 92 (1998), no. 3, 605--635.

\bibitem[D]{D}
C.~Dodd,
{\em The p-cycle of holonomic D-modules and auto-equivalences of the Weyl
algebra},
preprint (2015), arXiv:1510.05734.

\bibitem[E]{E}
F. Eshmatov,
{\em DG-models of projective modules and Nakajima quiver varieties},
Homology, Homotopy and Applications 9 (2007), no. 2, 177--208.

\bibitem[H1]{H1}
T.~Hodges,
{\em Morita equivalence of primitive factors of $U(sl(2))$},
Contemp. Math., 139, Amer. Math. Soc., Providence, RI, 1992.

\bibitem[H2]{H2}
T.~Hodges,
{\em  Noncommutative deformations of type-$A$ Kleinian singularities},
J. Algebra 161 (1993), no. 2, 271--290.


\bibitem[Hol]{Hol}
M.~Holland,
{\em Quantization of the Marsden--Weinstein reduction for extended Dynkin
quivers},
Ann. Sci. \'Ecole Norm. Sup. 32 (1999), no. 6, 813--834.

\bibitem[L]{L}
P.~Levy,
{\em Isomorphism problems of noncommutative deformations of type $D$
Kleinian singularities},
Trans. Amer. Math. Soc. 361 (2009), no. 5, 2351--2375. 

\bibitem[M]{M}
I.~Musson,
{\em Hilbert schemes and noncommutative deformations of type A Kleinian
singularities},
J. Algebra 293 (2005), no. 1, 102--129.

\bibitem[RS]{RS}
L.~Richard and A.~Solotar,
{\em On Morita equivalence for simple generalized Weyl algebras},
Algebr. Represent. Theory 13 (2010), no. 5, 589--605.

\bibitem[S]{S}
J.~Stafford, 
{\em Endomorphisms of right ideals of the Weyl algebra},
Trans. Amer. Math. Soc. 299 (1987), no. 2, 623--639.

\end{thebibliography}
\end{document}